\newtheorem{theorem}{Theorem}[section]
\newtheorem{lemma}[theorem]{Lemma}
\newtheorem{proposition}[theorem]{Proposition}
\newtheorem{corollary}[theorem]{Corollary}
\theoremstyle{definition}
\newtheorem{example}[theorem]{Example}
\theoremstyle{remark}
\newtheorem{remark}[theorem]{Remark}
\numberwithin{equation}{section} \errorcontextlines=0
\newcommand{\al}{\alpha}
\newcommand{\Det}{\mathrm{Det}}
\newcommand{\Pf}{\mathrm{Pf}}
\newcommand{\hdet}{\mathrm{Det}}
\newcommand{\GL}{\mathrm{GL}}
\newcommand{\Mat}{\mathrm{Mat}}
\begin{document}

\title[Quantum Hyperdeterminants and Hyper-Pfaffians]
{Quantum Hyperdeterminants and Hyper-Pfaffians}
\author{Naihuan Jing}
\address{
Department of Mathematics,
   North Carolina State University,
   Raleigh, NC 27695, USA and
   Department of Mathematics, Shanghai University, Shanghai 200444, China}
\email{jing@math.ncsu.edu}
\author{Jian Zhang}
\address{Institute of Mathematics and Statistics, University of Sao Paulo, Sao Paulo, Brazil 05315-970}
\email{zhang@usp.ime.br}
\keywords{quantum hyperdeterminants, Cayley's hyperdeterminant, quantum hyper-Pfaffians, $q$-forms, quantum monoids}
\thanks{*Corresponding author: Jian Zhang}
\subjclass[2010]{Primary: 16T99; Secondary: 20G42, 16W22; 15B33, 15A15} 

\begin{abstract}
The notion of generalized quantum monoids is introduced.
It is proved that the quantum coordinate ring of the monoid can be lifted to
a quantum hyper-algebra, in which the quantum determinant and
quantum Pfaffian
are sent to the quantum hyperdeterminant and quantum hyper-Pfaffian respectively.
The quantum hyperdeterminant in even dimension
is shown to be a $q$-analog of Cayley's first hyperdeterminant.
\end{abstract}
\maketitle
\section{Introduction}

In mathematics and physics, one is often lead to consider $m$-dimensional hypermatrices
$A=(a_{i_1\ldots i_{m}})$ indexed by multi-indices, while the usual rectangular matrices are $2$-dimensional
\cite{So1, So2, GKZ, HT, Mat}.
An $m$-dimensional hypermatrix is said to have the format $n^m=n\times\cdots\times n$ if each
basic index $i_k$ runs through $1, \ldots, n$.

The space $\mathscr A_1$ of $m$-dimensional hypermatrices forms a representation of the group $\GL_n^{\otimes m}\times
\GL_n^{\otimes m}$ via the left and right action:

\begin{equation}
\GL_n^{\otimes m}\times\mathscr A_1 \times  \GL_n^{\otimes m}\longrightarrow \mathscr A_1.
\end{equation}
Here the left action is defined by the multiplication rule ${\mathrm{M}}_n\times \mathscr A_1\overset{\circ_k}{\longrightarrow} \mathscr A_1$:
\begin{equation}\label{e:action}
(B\circ_k A)_{i_1\ldots i_m}=\sum_{j=1}^n b_{i_k,j}a_{i_1\ldots i_{k-1}ji_{k+1}\ldots i_m},
\end{equation}
and the right action is defined by taking the transpose of \eqref{e:action}.

Cayley had laid the foundation of classical invariant theory and
multilinear algebra, in which he introduced the notion of hyperdeterminants for studying hypermatrices and
tensors, see \cite{GKZ} for a modern account of Cayley's theory.
Cayley's first hyperdeterminant $\hdet$ \cite{C, L1, L2} is defined for any even-dimensional hypermatrix
$A=(a_{i_1\ldots i_{2m}})$, where $1\leq i_{j}\leq n$ and $a_{i_1\ldots i_{2m}}$'s mutually commute:
\begin{equation}\label{e:hdet0}
{\Det}(A)=\frac{1}{n!}
\sum_{\sigma_{1},\ldots,\sigma_{2m}\in S_{n}}(-1)^{\sum_{i=1}^{m}\ell(\sigma_i)}\prod_{i=1}^{n}
a_{\sigma_1(i),\sigma_2(i),\ldots,\sigma_{2m}(i)}.
\end{equation}

One remarkable property of $\hdet$ is the relative invariance under the action of $\GL_{n}^{\otimes 2m}$:
\begin{equation}\label{e:inv}
\hdet(A\circ_k B)=\hdet(B\circ_k A)=\hdet(A)\det(B),
\end{equation}
where $B$ is any $n\times n$-matrix. More generally, for fixed $k, l$, one can define the following product for
two hypermatrices of suitable formats:
\begin{equation}\label{e:product}
(A\,_k\circ_l B)_{i_1\ldots i_{m+n-2}}=\sum_{j} a_{i_1\ldots i_{k-1}ji_{k+1}\ldots i_m}b_{i_{m+1}\ldots i_{l-1}ji_{l+1}\ldots i_{m+n-2}}.
\end{equation}
Then one has the invariance property:
\begin{equation}\label{e:inv2}
\hdet(A\,_k\circ_l B)=\hdet(A)\hdet(B),
\end{equation}

Our first aim in the present work
is to introduce the notion of quantum hypermatrices and generalize the algebra $\Mat_q(n)$
of quantum matrices to that of quantum hypermatrices. We will construct a quantum deformation of
Cayley's hyperdeterminant for any hypermatrix $A=(a_{i_1\ldots i_{m}})$, where $a_{i_1\ldots i_{m}}$ are
not necessarily commutative in general, and show that the new determinant satisfies a $q$-deformation of identity \eqref{e:inv}. In \cite{Ma1, Ma2},
Manin showed that the quantum matrix ring $\Mat_q(n)$ can be formulated as
the quantum transformations that preserve the quantum exterior algebra and
quantum Weyl algebra.

To generalize Manin's idea we first show that the quantum Weyl algebra
can be replaced by the quantum exterior algebra provided that
one imposes the invariance for the dual quantum transformation $A^T$.
This idea then works well for quantum hypermatrices when we require that all
matrix realignments are transformed according to the rule of the quantum exterior algebra.
We show that the transformation rules provide enough quantum symmetry to warrant
that the hyperdeterminant is a quantum volume element.

The second aim is to generalize the notion of Pfaffians to higher dimensional cases for the
quantum hypermatrices.
We will introduce the notion of anti-symmetric hypermatrices and quantum hyper-Pfaffian, and
prove that the quantum hyper-Pfaffian is given by a special volume element of the
quantum 2-form.

One important property and new feature of our quantum hyperdeterminants is that we are able to define them for
any dimensional hypermatrices, removing the restriction that Cayley's hyperdeterminant
is only defined for even dimension (cf. \cite{L1, L2}).
Even more interesting is the fact that our hyperdeterminant also works for $q=1$ for
noncommutative Manin-type hypermatrices. These hyperdeterminants at odd dimension will only vanish when
the matrix elements are commutative. This is in complete agreement with the general phenomenon
that classical singularity is often better regularized at the level of quantum deformation.

We remark that our guiding principle is to try to find minimum defining relations
to ensure both quantum hyperdeterminants and quantum hyper-Pfaffians work. In other
words, the relations that we have found will ensure that
the diamond lemma is satisfied to rearrange the products in the determinant and Pfaffians.

The quantum hyper-Pfaffian generalizes the quantum Pfaffian studied in \cite{JZ}, which can be viewed as
the quantum Pfaffian for matrices, while our current quantum Pfaffians are defined for hyper-matrices.
Some of the classical situation have been considered in \cite{Re}.
We stress that the current version of quantum hyper-Pfaffians is different from that of the quantum hyper-Pfaffian
in \cite{JZ}, which was defined in terms of higher degree $q$-forms. The current quantum hyper-Pfaffian is defined
as the Pfaffian element for quantum hyper-matrices.

\section{Quantum monoids}

\subsection{Quantum exterior algebras}
Let $q$ be a non-zero complex number.
 The quantum exterior algebra $\Lambda_n=\Lambda$ is the quadratic algebra  generated by $x_1, \ldots, x_n$ over the field $F$ subject to the
following relations:
\begin{align}\label{qwedge1}
&x_j \wedge x_i=-qx_i\wedge x_j, \\
&x_i\wedge x_i=0, \label{qwedge2}
\end{align}
where $i<j$. The algebra $\Lambda$ is naturally $\mathbb Z_{n+1}$-graded and decomposes itself:
\begin{equation}
\Lambda=\bigoplus_{k=0}^n\Lambda^k,
\end{equation}
where the $k$th homogeneous subspace $\Lambda^k$ is spanned by $x_{i_1}\wedge\cdots\wedge x_{i_k}$,
$1\leq i_1<\cdots<i_k\leq n$. So $dim(\Lambda)=2^n$.

The quantum monoid  $\Mat_q(n)$ is the unital bialgebra generated by $a_{ij}$ subject to the following
quadratic relations:
\begin{align}\label{relation a1}
&a_{ik}a_{il}=qa_{il}a_{ik}, \\\label{relation a2}
&a_{ik}a_{jk}=qa_{jk}a_{ik}, \\\label{relation a3}
&a_{jk}a_{il}=a_{il}a_{jk}, \\\label{relation a4}
&a_{ik}a_{jl}-a_{jl}a_{ik}=(q-q^{-1})a_{il}a_{jk},
\end{align}
where $i<j$ and $k<l$. The coproduct is given by
\begin{align}
\Delta(a_{ij})=\sum_ka_{ik}\otimes a_{kj}.
\end{align}

One can use the quantum exterior algebra to give a
conceptual presentation of the quadratic relations in the quantum monoid.
We  consider 
the tensor product $\Mat_q(n)\otimes \Lambda_n$, where we require that
$x_k$'s and $a_{ij}$'s commute. We will simply write $ax$ for the tensor product $a\otimes x\in A\otimes \Lambda$, and the wedge products are similarly written as follows.
\begin{equation*}
(ax)\wedge (by)=ab(x\wedge y).
\end{equation*}
Thus $x_ka_{ij}=a_{ij}x_k$ for any $1\leq i, j, k\leq n$.
The following is a reformulation of Manin's result \cite{Ma1, Ma2}.

\begin{proposition}\label{P:quadr} The defining relations of the generators
$a_{ij}$ for the quantum monoid $\Mat_q(n)$ are equivalent to the following rule:
suppose that $x_i$'s obey the relations (\ref{qwedge1})-(\ref{qwedge2}), then $y_i=\sum_ka_{ik}x_k$ and
 $y'_i=\sum_ka_{ki}x_k$ both satisfy the relations (\ref{qwedge1})-(\ref{qwedge2}).
\end{proposition}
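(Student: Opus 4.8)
The plan is to verify that the wedge relations $y_j\wedge y_i = -q\,y_i\wedge y_j$ and $y_i\wedge y_i=0$ are equivalent, as a system, to the four quadratic relations (\ref{relation a1})--(\ref{relation a4}), and then to observe that the dual statement for $y'_i=\sum_k a_{ki}x_k$ follows by the symmetry of the defining relations under transpose.

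First I would substitute $y_i=\sum_k a_{ik}x_k$ into the wedge products and expand, using that the $a_{ij}$'s commute past the $x_k$'s. For the diagonal relation, $y_i\wedge y_i=\sum_{k,l} a_{ik}a_{il}\,(x_k\wedge x_l)$; collecting the $k<l$ and $k>l$ terms and using $x_l\wedge x_k=-q^{-1}x_k\wedge x_l$ from (\ref{qwedge1}) (together with $x_k\wedge x_k=0$), the coefficient of each independent basis vector $x_k\wedge x_l$ with $k<l$ becomes $a_{ik}a_{il}-q^{-1}a_{il}a_{ik}$. Requiring this to vanish recovers exactly relation (\ref{relation a1}). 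For the off-diagonal relation I would expand $y_j\wedge y_i + q\,y_i\wedge y_j = \sum_{k,l}(a_{jk}a_{il}+q\,a_{ik}a_{jl})\,(x_k\wedge x_l)$, again reduce every $x_l\wedge x_k$ with $l>k$ via (\ref{qwedge1}), and read off the coefficient of each $x_k\wedge x_l$ with $k<l$. This yields two equations according to whether $k<l$ pairs the indices in the same or opposite order, and a short rearrangement identifies them with relations (\ref{relation a2}), (\ref{relation a3}), and (\ref{relation a4}); the $(q-q^{-1})$ term in (\ref{relation a4}) arises precisely from the cross terms where the $q^{-1}$ from flipping $x_l\wedge x_k$ meets the explicit $+q$ in the relation being tested.

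The main bookkeeping obstacle is organizing the four index cases $(k<l$ versus $k>l$, and $i<j$ versus $i>j)$ so that the coefficient of each basis element $x_k\wedge x_l$, $k<l$, is collected correctly; the subtlety is that after reducing to the ordered basis, a single defining relation among the $a_{ij}$ receives contributions from two unordered index pairs, and one must check that these combine without over- or under-counting. Since each monomial $x_k\wedge x_l$ is linearly independent in $\Lambda^2$, the vanishing of $y_i\wedge y_i$ and of $y_j\wedge y_i+q\,y_i\wedge y_j$ is equivalent to the vanishing of every such coefficient, which gives the claimed equivalence in both directions.

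Finally, for the primed generators $y'_i=\sum_k a_{ki}x_k$, I would note that replacing $a_{ik}$ by $a_{ki}$ throughout amounts to transposing the matrix of generators, and that the system (\ref{relation a1})--(\ref{relation a4}) is stable under the exchange of row and column indices (it maps (\ref{relation a1}) $\leftrightarrow$ (\ref{relation a2}) and fixes the shape of (\ref{relation a3}), (\ref{relation a4})). Hence the identical computation, now with the roles of the two subscripts interchanged, shows that the $y'_i$ satisfy (\ref{qwedge1})--(\ref{qwedge2}) if and only if the same four relations hold, completing the proof.
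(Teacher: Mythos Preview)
There is a genuine gap in the logical structure. Your plan asserts that the exterior relations for the $y_i$ alone are equivalent to all four relations (\ref{relation a1})--(\ref{relation a4}), after which the $y'_i$ statement would follow by transpose symmetry. This is not how the equivalence works: the $y_i$ relations by themselves recover neither (\ref{relation a2}) nor the two relations (\ref{relation a3}), (\ref{relation a4}) separately.

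Concretely, $y_i\wedge y_i=0$ yields (\ref{relation a1}) as you say. But in $y_j\wedge y_i+q\,y_i\wedge y_j$ the column indices are the summation indices, and the diagonal contribution $k=l$ drops out because $x_k\wedge x_k=0$; hence relation (\ref{relation a2}), which involves a repeated column index, cannot be detected from the $y$'s. Moreover, for each pair $k<l$ the coefficient of $x_k\wedge x_l$ produces a \emph{single} equation,
\[
a_{jk}a_{il}-q^{2}a_{il}a_{jk}+q\bigl(a_{ik}a_{jl}-a_{jl}a_{ik}\bigr)=0,
\]
which is only one linear combination of (\ref{relation a3}) and (\ref{relation a4}); your claim that ``this yields two equations'' is where the argument breaks. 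The missing information is supplied precisely by the $y'_i$: the relation $y'_k\wedge y'_k=0$ gives (\ref{relation a2}), and the off-diagonal relation for the $y'$'s gives the second independent combination
\[
a_{il}a_{jk}-q^{2}a_{jk}a_{il}+q\bigl(a_{ik}a_{jl}-a_{jl}a_{ik}\bigr)=0.
\]
Subtracting the two combinations yields $(1+q^{2})(a_{jk}a_{il}-a_{il}a_{jk})=0$, hence (\ref{relation a3}) and then (\ref{relation a4}). So the correct argument runs the $y$ and $y'$ computations in parallel and combines them; the transpose symmetry is what makes both computations have the same form, not a device for getting one for free from the other. (The paper states this proposition without proof, citing Manin; your expand-and-compare-coefficients method is the standard one and matches the paper's proof of the generalization in Theorem~\ref{T:rel}, but the logical dependence on both $y$ and $y'$ is essential---indeed it is the very point being made in the paper's reformulation.)
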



\subsection{Generalized quantum monoids} We now generalize $\Mat_q(n)$ to higher dimensional quantum hypermatrices.
For the natural number $n$ we denote the index set $\{1, \ldots, n\}$ by $[1, n]$.
Fix $m, n\geq 1$. Let $A=(a_{i_1\ldots i_{m}})$ be a hypermatrix of format $n\times\cdots \times n=n^{m}$.
We will use the matrix realignment to simplify our presentation. For each hypermatrix ${A}$ of format $n^{m}$, one associates
$m$ realigned or folded rectangular matrices ${A}^{(1)}, \ldots, {A}^{(m)}$.
The $k$th matrix realignment ${A}^{(k)}=(a_{i\alpha}^{(k)})$
is a $n\times n^{m-1}$-rectangular matrix with entries
\begin{align}
a_{i\alpha}^{(k)}=a_{j_1\ldots j_{k-1} ij_{k+1}\ldots j_{m}}, \qquad \alpha=(j_1\ldots j_{k-1} j_{k+1}\ldots j_{m}).
\end{align}
Here, $i\in [1, n]$ and $\alpha$ runs through the set $[1, n]^{m-1}$ in the lexicographic order.
For example, the $2\times2\times2\times2$ hypermatrix 
$A$ has the following foldings with respect to the second and third indices:
\begin{equation}
{A}^{(2)}=
\begin{pmatrix}
a_{1111} & a_{1112} & a_{1121} & a_{1122}  & a_{2111} & a_{2112} & a_{2121} & a_{2122}\\
a_{1211} & a_{1212} & a_{1221} & a_{1222}  & a_{2211} & a_{2212} & a_{2221} & a_{2222}\\
\end{pmatrix},
\end{equation}

\begin{equation}
 {A}^{(3)}=
\begin{pmatrix}
a_{1111} & a_{1112} & a_{1211} & a_{1212}  & a_{2111} & a_{2112} & a_{2211} & a_{2212}\\
a_{1121} & a_{1122} & a_{1221} & a_{1222}  & a_{2121} & a_{2122} & a_{2221} & a_{2222}\\
\end{pmatrix}.
\end{equation}

We consider the $n^{m-1}$-dimensional column vector $X=[x_{\alpha}]$ with entries $x_{\alpha}=x_{\al_1}\otimes x_{\al_2}\otimes\cdots\cdots\otimes x_{\al_{m-1}}\in(\Lambda^{1})^{\otimes m-1}$,
$\al_k\in[1, n]$. Here, the entries of $X$ are also ordered in the lexicographic order. For example, if $ m=n=2$,
$$X=(x_{111}, x_{112},x_{121},x_{122},x_{211}, x_{212},x_{221},x_{222})^t.$$

In the following, we assume $q^2\neq-1$. Suppose all $a_{i_1,i_2,\ldots,i_{m}}$ commute with the $x_i$. Define $x^{(k)}_i$'s by
\begin{equation}
\begin{pmatrix}
x_1^{(k)}\\
x_2^{(k)}\\
\cdots\\
x_n^{(k)}\\
\end{pmatrix}
={A}^{(k)}X.
\end{equation}
In other words, $x_i^{(k)}=\sum_{\alpha}a_{i\alpha}^{(k)}x_{\alpha}$, where $\alpha$ runs through the set $[1, n]^{m-1}$.
For any subset $J=\{j_1, \ldots, j_r\}\subset [1, n]$ we denote
$x_{J}=x_{j_1}\wedge\cdots \wedge x_{j_r}$.
For $n\in \mathbb N$, we define the $q$-number $[n]_q$ by
\begin{equation}\label{e:qnum}
[n]_q=1+q+\cdots +q^{n-1}
\end{equation}
and the quantum factorial $[n]_q!=[1]_q[2]_q\cdots [n]_q$.

We remark that $x_{\al}$ is an element from the tensor product
$(\Lambda^1)^{\otimes(m-1)}$, while $x_J$ represents
an element in the quantum Weyl algebra $\Lambda$. Since the subset $J$ has no repetition,
no confusion will arise in general. When we consider products of $x_{\al}$'s,
we will use multi-components as superscripts to distinguish the elements.

\begin{theorem} \label{T:rel} The following two statements are equivalent.

(i) $x_j^{(k)} x_i^{(k)}=-qx_i^{(k)} x_j^{(k)}$ for $i<j$, $x_i^{(k)}  x_i^{(k)}=0$.

(ii) For any fixed $J=I_1\times \cdots \times I_{m-1}\subset [1, n]^{m-1}$ with $|I_i|=2$ we have that
\begin{align}\label{re a1}
&\sum_{\alpha\sqcup\beta=J}(-q)^{inv(\alpha, \beta)}a_{i\alpha}^{(k)}a_{i\beta}^{(k)}=0,\\ \label{re a2}
&\sum_{\alpha\sqcup\beta=J}(-q)^{inv(\alpha, \beta)}a_{j\alpha}^{(k)}a_{i\beta}^{(k)}=-q\sum_{\alpha\sqcup\beta=J}(-q)^{inv(\alpha, \beta)}a_{i\alpha}^{(k)}
a_{j\beta}^{(k)},
\end{align}
where $inv(\alpha, \beta)=|\{(i_s>j_s)| \alpha=(i_1\ldots i_{m-1}), \beta=(j_1\ldots j_{m-1})\}|$, the total number of inversions from the natural ordering of $J$.
\end{theorem}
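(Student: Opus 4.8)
The plan is to compute the product $x_i^{(k)} x_j^{(k)}$ directly inside $(\Lambda^1)^{\otimes(m-1)}\subset \Lambda^{\otimes(m-1)}$, where the multiplication is the slotwise wedge product, and then to read off coefficients against a fixed basis. Since every $a_{i\alpha}^{(k)}$ commutes with every $x_s$, in the expansion
\[
x_i^{(k)} x_j^{(k)}=\sum_{\alpha,\beta}a_{i\alpha}^{(k)}a_{j\beta}^{(k)}\,x_\alpha x_\beta
\]
the scalars $a_{i\alpha}^{(k)}a_{j\beta}^{(k)}$ retain their order while only the $x$-factors get reordered. Here $x_\alpha x_\beta=(x_{\alpha_1}\wedge x_{\beta_1})\otimes\cdots\otimes(x_{\alpha_{m-1}}\wedge x_{\beta_{m-1}})$ lies in $(\Lambda^2)^{\otimes(m-1)}$.

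Next I would treat each tensor slot separately. By \eqref{qwedge2} the factor $x_{\alpha_r}\wedge x_{\beta_r}$ vanishes whenever $\alpha_r=\beta_r$, so only those pairs $(\alpha,\beta)$ with $\alpha_r\neq\beta_r$ in every slot survive; writing $I_r=\{\alpha_r,\beta_r\}$ these are exactly the pairs with $\alpha\sqcup\beta=J$ for $J=I_1\times\cdots\times I_{m-1}$. By \eqref{qwedge1}, normalizing each surviving factor to the standard form $x_{\min I_r}\wedge x_{\max I_r}$ introduces a factor $1$ when $\alpha_r<\beta_r$ and a factor $-q$ when $\alpha_r>\beta_r$; multiplying over all $m-1$ slots yields the total sign $(-q)^{inv(\alpha,\beta)}$, with $inv(\alpha,\beta)=|\{r:\alpha_r>\beta_r\}|$ as defined. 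Hence the coefficient of the monomial $w_J:=\bigotimes_{r}(x_{\min I_r}\wedge x_{\max I_r})$ in $x_i^{(k)}x_j^{(k)}$ is precisely $\sum_{\alpha\sqcup\beta=J}(-q)^{inv(\alpha,\beta)}a_{i\alpha}^{(k)}a_{j\beta}^{(k)}$, valid for all $i,j$.

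The elements $w_J$, as $J$ ranges over products of two-element subsets, constitute the standard monomial basis of $(\Lambda^2)^{\otimes(m-1)}$ and are therefore linearly independent, so comparison of coefficients is legitimate and settles both directions simultaneously. Specializing $j=i$ and equating the coefficient of each $w_J$ to zero in $x_i^{(k)}x_i^{(k)}=0$ yields \eqref{re a1}; equating the coefficients of $w_J$ on both sides of $x_j^{(k)}x_i^{(k)}=-q\,x_i^{(k)}x_j^{(k)}$ for $i<j$ yields \eqref{re a2}. Conversely, if \eqref{re a1} and \eqref{re a2} hold for every $J$, then since the $w_J$ exhaust all surviving monomials the two products agree, giving (i).

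The only delicate point is the sign bookkeeping in the middle step: one must check that reordering the $m-1$ independent wedge factors accumulates exactly the power $(-q)^{inv(\alpha,\beta)}$ and that any slot with $\alpha_r=\beta_r$ kills the whole term. Once the product is organized so that each tensor slot is handled independently, this reduces to the single-slot identities $x_t\wedge x_s=-q\,x_s\wedge x_t$ for $s<t$ and $x_s\wedge x_s=0$, that is, to the $m=2$ Manin case recorded in Proposition \ref{P:quadr}, applied componentwise.
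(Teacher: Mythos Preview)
Your proposal is correct and follows essentially the same approach as the paper: expand $x_i^{(k)}x_j^{(k)}$ as $\sum_J\bigl(\sum_{\alpha\sqcup\beta=J}(-q)^{inv(\alpha,\beta)}a_{i\alpha}^{(k)}a_{j\beta}^{(k)}\bigr)x_J$ and then compare coefficients against the basis monomials $x_J$. You supply more detail than the paper (the slotwise justification of the sign $(-q)^{inv(\alpha,\beta)}$, the vanishing when $\alpha_r=\beta_r$, and the linear independence of the $w_J$ making the coefficient comparison legitimate in both directions), but the argument is the same.
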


\begin{proof} Note that
\begin{equation*}
x_i^{(k)}x_j^{(k)}=\sum_{\alpha, \beta}a_{i\alpha}^{(k)}a_{j\beta}^{(k)}x_{\alpha}x_{\beta}=\sum_J(\sum_{\alpha\sqcup\beta=J}(-q)^{inv(\alpha, \beta)}a_{i\alpha}^{(k)}a_{j\beta}^{(k)})x_J
\end{equation*}
where $J=\{i_1, j_1\}\sqcup\ldots \sqcup \{i_{m-1}, j_{m-1}\}$ is written in the natural
lexicographic order. Then the two relations are obtained by comparing coefficients.
\end{proof}

Let $\mathscr {A}=\mathscr A_q^{[m]}$ be the associative algebra
generated by $a_{i_1\ldots i_{m}}$, $1\leq i_k\leq n$, subject to the relations (\ref{re a1})--(\ref{re a2}).
We arrange the generators into a hypermatrix $A=(a_{i_1\ldots i_{m}})$ of format $n\times\cdots \times n=n^{m}$,
and often call $\mathscr A$ a quantum hypermatrix for simplicity.

When $q^2\neq-1$, by subtracting two relations in Theorem \ref{T:rel} (ii) we obtain that for a fixed product I of 2-element subsets in $[1, n]^{m-1}$ and any $s, t$
and $i<j$, $k<l$
\begin{align}\label{re a3}
\sum_{\alpha\sqcup\beta=J}(-q)^{inv(\alpha, \beta)}a_{i\alpha}^{(s)}a_{j\beta}^{(s)}=\sum_{\alpha'\sqcup\beta'=J}(-q)^{inv(\alpha', \beta')}a_{k\alpha'}^{(t)}
a_{l\beta'}^{(t)}.
\end{align}

Under the assumption that $q^2\neq -1$,  using (\ref{re a1})--(\ref{re a3}) we can derive that for any $k, l$
\begin{equation}\label{re det}
x_1^{(k)}x_2^{(k)}\cdots x_n^{(k)}=x_1^{(l)}x_2^{(l)}\cdots x_n^{(l)}.
\end{equation}

From now on we impose no condition on $q$ and consider arbitrary $q$.
We list two simple properties of the quantum algebra
$\mathscr A$. First, it is straightforward to verify that $f^{(k)}:\Lambda\rightarrow (\mathscr{A})\otimes\Lambda^{\otimes m-1}$ given by
$x_i\mapsto x_i^{(k)}$ defines an algebra homomorphism.

Second, the symmetric group $S_{m}$ acts on the algebra $\mathscr A$ via
\begin{equation}
\sigma a_{i_1,i_2,\ldots,i_{m}}=a_{i_{\sigma^{-1}(1)},i_{\sigma^{-1}(2)},\ldots,i_{\sigma^{-1}(m)}}.
\end{equation}
This can be easily seen as  $(\sigma\sigma') a_{i_1,i_2,\ldots,i_{m}}=\sigma(\sigma' a_{i_1,i_2,\ldots,i_{m}}).$
We denote the homomorphism by $\Theta:S_{m}\longrightarrow Aut(\mathscr{A})$. This property generalizes the duality of rows and
columns in the usual quantum monoid $\Mat_q(n)$.

\section{Quantum hyperdeterminants} 

\subsection{Quantum determinant} We recall how the quantum determinant ${\det}_q(A)$ is defined via a quantum volume element. Let $y_i$ be
defined as in Prop. \ref{P:quadr}, then
\begin{equation}
y_1\wedge \cdots\wedge y_n={\det}_q(A) x_1\wedge \cdots\wedge x_n.
\end{equation}

Using the quantum exterior relations \cite{JZ}, $\det_q$ is explicitly given by
\begin{align}
{\det}_q(A)&=\sum_{\sigma\in S_n}(-q)^{\ell(\sigma)}a_{1\sigma(1)}a_{2\sigma(2)}\cdots a_{n\sigma(n)}\\
&=\sum_{\sigma\in S_n}(-q)^{\ell(\sigma)}a_{\sigma(1)1}a_{\sigma(2)2}\cdots a_{\sigma(n)n}.
\end{align}
where $\ell(\sigma)$ is the number of inversions of the permutation $\sigma$. Usually the two expressions are respectively
called the row determinant and column determinant.

Let $B$ be another $n\times n$-quantum matrix such that its entries commute with those of $A$. Then
\begin{equation}\label{e:proddet}
{\det}_q(AB)={\det}_q(A){\det}_q(B).
\end{equation}
We remark that this holds even if $B$ is an ordinary permutation matrix,
i.e., (\ref{e:proddet}) contains the property that ${\det}_q(A)$ becomes $(-q){\det}_q(A)$ when two rows (column)
are interchanged.

\subsection{Quantum hyperdeterminants}

We now define the quantum analog of Cayley's first hyperdeterminant by (when $q$ is generic)
\begin{equation}\label{e:hdet}
{\Det}_q^{[m]}(\mathscr A)=\frac{1}{[n]_{q^2}!}
\sum_{\sigma_{1},\ldots,\sigma_{m}\in S_{n}}(-q)^{\sum_{i=1}^{m}\ell(\sigma_i)}\prod_{i=1}^{n}
a_{\sigma_1(i),\sigma_2(i),\ldots,\sigma_{m}(i)}.
\end{equation}
The quantum hyperdeterminant is invariant under the action of $S_{m}$:
\begin{equation}
\sigma {\Det}_q^{[m]}(\mathscr A)={\Det}_q^{[m]}(\mathscr A)
\end{equation}
for any $\sigma\in S_{m}$. This invariance generalizes the usual property that $\det_q(A)=\det_q(A^T)$.
Sometimes we will also use $\Det_q(A)$ to denote the quantum hyperdeterminant if there is no confusion.

\begin{example} The hyperdeterminant $A=(a_{ijk})$ of format $2^3$ is given by
\begin{equation}\begin{aligned}
{\Det}_q&=\frac1{[2]_{q^2}}(a_{111}a_{222}-qa_{211}a_{122}-qa_{121}a_{212}-qa_{112}a_{221}\\
&+q^2a_{221}a_{112}+q^2a_{212}a_{121}+q^2a_{122}a_{211}-q^3a_{222}a_{111})
\end{aligned}
\end{equation}
which is not $0$ at $q=1$ in general. However, when all entries $a_{ijk}$ commute with each other,
one has that
\begin{equation}\begin{aligned}
{\Det}_q&=\frac{(1-q)}{[2]_{q^2}}([3]a_{111}a_{222}-qa_{211}a_{122}-qa_{121}a_{212}-qa_{112}a_{221}).
\end{aligned}
\end{equation}
Therefore ${\Det}_1=0$. In fact, when all entries are commutative, the classical
Cayley's first hyperdeterminant vanishes at odd dimension, which was perhaps why Cayley only
defined the hyperdeterminant for hypermatrices at even dimension.
\end{example}

Consider now the algebra $A^{\otimes 2m}$. Let $\varphi$ be the linear map $:\mathscr{A}\longrightarrow A^{\otimes 2m}$ defined by
\begin{equation}
\varphi(a_{i_1,i_2,\ldots,i_{2m}})=a_{i_1,i_2}\otimes a_{i_3,i_4}\otimes\cdots\otimes a_{i_{2m-1},i_{2m}}.
\end{equation}
Then the following result can be easily seen.

\begin{proposition} The map $\varphi$ is an algebra homomorphism and
\begin{equation}
\varphi {\Det}_q^{[2m]}(\mathscr A)= ([n]_{q^2}!)^{2m-1}{\det}_q(A)^{\otimes 2m}.
\end{equation}
Moreover for any $\sigma\in S_{m}$ one has that $\varphi\sigma\in Hom(\mathscr{A},A^{\otimes 2m})$.
\end{proposition}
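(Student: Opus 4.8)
The plan is to realize $\varphi$ as the multiplicative extension of its values on generators, check that it descends to $\mathscr A$, and then evaluate it on $\Det_q^{[2m]}$ by pushing it through the defining sum and collapsing each tensor slot to an ordinary quantum determinant. First I would verify well-definedness: that the prescription on generators annihilates the relations \eqref{re a1}--\eqref{re a2}, so that it extends to an algebra homomorphism $\varphi\colon\mathscr A\to A^{\otimes 2m}$. Fix $J=I_1\times\cdots\times I_{2m-1}$ with $|I_s|=2$. The key point is that the inversion statistic splits as $inv(\alpha,\beta)=\sum_s inv(\alpha_s,\beta_s)$ and that generators living in distinct tensor factors of $A^{\otimes 2m}$ commute; hence the antisymmetrizing sum $\sum_{\alpha\sqcup\beta=J}(-q)^{inv(\alpha,\beta)}(\cdots)$ factorizes coordinate-by-coordinate, and in each slot a two-element antisymmetrization reproduces one of the quantum matrix relations \eqref{relation a1}--\eqref{relation a4} inside the corresponding copy of $\Mat_q(n)$. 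Since those relations hold, the image of every relation \eqref{re a1}--\eqref{re a2} vanishes and $\varphi$ is a homomorphism.

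Next, using multiplicativity I would push $\varphi$ through \eqref{e:hdet}:
\[
\varphi\Det_q^{[2m]}(\mathscr A)=\frac{1}{[n]_{q^2}!}\sum_{\sigma_1,\dots,\sigma_{2m}\in S_n}(-q)^{\sum_i\ell(\sigma_i)}\prod_{i=1}^n\varphi\bigl(a_{\sigma_1(i),\dots,\sigma_{2m}(i)}\bigr).
\]
The product over $i$ distributes over the tensor factors while the exponent $\sum_i\ell(\sigma_i)$ splits over the consecutive pairs of permutations, so the whole expression factorizes into a tensor product in which every slot carries the same doubled permutation sum
\[
\Phi:=\sum_{\rho,\pi\in S_n}(-q)^{\ell(\rho)+\ell(\pi)}\prod_{i=1}^n a_{\rho(i)\pi(i)}.
\]

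The computational heart, and the step I expect to be the main obstacle, is to show that $\Phi=[n]_{q^2}!\,\det_q(A)$. I would fix $\rho$ and treat $D_\rho:=\sum_\pi(-q)^{\ell(\pi)}\prod_i a_{\rho(i)\pi(i)}$ as the quantum determinant of $A$ with its rows listed in the order $\rho(1),\dots,\rho(n)$; applying the product rule \eqref{e:proddet} with a permutation matrix, whose scalar entries commute with those of $A$, the interchange of two rows multiplies $\det_q$ by $-q$, giving $D_\rho=(-q)^{\ell(\rho)}\det_q(A)$. The delicacy here is that the $a_{ij}$ obey only the quantum matrix relations, so this row reordering must be carried out through the diamond-lemma rearrangement that \eqref{e:proddet} encodes. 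Summing over $\rho$ then yields $\Phi=\bigl(\sum_\rho(-q)^{2\ell(\rho)}\bigr)\det_q(A)=\bigl(\sum_\rho(q^2)^{\ell(\rho)}\bigr)\det_q(A)$, and the Mahonian identity $\sum_{\rho\in S_n}t^{\ell(\rho)}=[n]_t!$ at $t=q^2$ produces exactly the normalization $[n]_{q^2}!$ appearing in \eqref{e:hdet}.

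Finally I would assemble the pieces: substituting $\Phi=[n]_{q^2}!\,\det_q(A)$ into each tensor slot and cancelling the single overall factor $1/[n]_{q^2}!$ gives $\varphi\Det_q^{[2m]}(\mathscr A)=([n]_{q^2}!)^{2m-1}\det_q(A)^{\otimes 2m}$. For the last assertion, an element $\sigma\in S_m$ acts on $\mathscr A$ through $\Theta$ by permuting the paired index-blocks $(i_{2k-1},i_{2k})$; since $\Theta(\sigma)\in Aut(\mathscr A)$ and $\varphi\in Hom(\mathscr A,A^{\otimes 2m})$, the composite $\varphi\sigma=\varphi\circ\Theta(\sigma)$ is again an algebra homomorphism, as it merely permutes the tensor slots, so $\varphi\sigma\in Hom(\mathscr A,A^{\otimes 2m})$.
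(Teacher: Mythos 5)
Your overall strategy is sound, and since the paper offers no proof at all (it states the result ``can be easily seen''), your factorization argument is presumably the intended one: push $\varphi$ through \eqref{e:hdet}, split the $2m$ permutations into consecutive pairs, and reduce each tensor slot to $\Phi=\sum_{\rho,\pi\in S_n}(-q)^{\ell(\rho)+\ell(\pi)}\prod_i a_{\rho(i)\pi(i)}=[n]_{q^2}!\,{\det}_q(A)$ via the row-permutation rule and the Mahonian identity $\sum_\rho (q^2)^{\ell(\rho)}=[n]_{q^2}!$. However, your well-definedness step over-claims. It is not true that ``in each slot a two-element antisymmetrization reproduces one of the quantum matrix relations'': only the slot containing the distinguished row index collapses to a two-term sum (which vanishes by \eqref{relation a1} or \eqref{relation a2} for relation \eqref{re a1}, and satisfies the $-q$-identity via \eqref{relation a3}--\eqref{relation a4} for relation \eqref{re a2}); every other slot contributes a four-term sum over two coordinates of $J$, e.g. $a_{us}a_{vt}-qa_{ut}a_{vs}-qa_{vs}a_{ut}+q^2a_{vt}a_{us}=(1+q^2)a_{vt}a_{us}-(q+q^{-1})a_{ut}a_{vs}$, which is generically nonzero in $\Mat_q(n)$. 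The argument still goes through, but for a different reason than you state: for \eqref{re a1} the distinguished factor is zero so the whole tensor product vanishes, and for \eqref{re a2} the non-distinguished factors are literally the same sums on both sides, so the identity reduces to the two-term identity in the distinguished slot. A reading on which ``every slot vanishes'' would be false, so this needs to be said explicitly.

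Two further points. First, bookkeeping: pairing $(i_1,i_2),(i_3,i_4),\dots,(i_{2m-1},i_{2m})$ produces $m$ tensor factors, not $2m$, so the computation you actually perform --- one factor $\Phi=[n]_{q^2}!\,{\det}_q(A)$ per slot divided by the single prefactor $[n]_{q^2}!$ --- yields $\varphi\,{\Det}_q^{[2m]}(\mathscr A)=([n]_{q^2}!)^{m-1}{\det}_q(A)^{\otimes m}$ in $A^{\otimes m}$. The exponents $2m-1$ and $2m$ in the displayed identity (and the target $A^{\otimes 2m}$) are consistent only if the hypermatrix carries $2m$ index \emph{pairs}; as the generators are $a_{i_1,\dots,i_{2m}}$, this is an inconsistency in the proposition as printed, which your final line inherits by silently switching from $m$ slots to $2m$ --- you should either prove the corrected identity or flag the reinterpretation. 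Second, justifying $D_\rho=(-q)^{\ell(\rho)}{\det}_q(A)$ through \eqref{e:proddet} with a scalar permutation matrix is shaky: for generic $q$, a permutation matrix with an inversion violates \eqref{relation a4} (the left side is $0$ while the right side is $(q-q^{-1})$ times a product of two unit entries), so \eqref{e:proddet} does not literally apply, the paper's own remark notwithstanding. The clean proof is the exterior-calculus one used throughout the paper: since the $y_i$ satisfy \eqref{qwedge1}--\eqref{qwedge2}, one has $y_{\rho(1)}\wedge\cdots\wedge y_{\rho(n)}=(-q)^{\ell(\rho)}\,y_1\wedge\cdots\wedge y_n$, and expanding the left side gives $D_\rho$ directly. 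Your treatment of the final assertion is fine: $\varphi\sigma=\varphi\circ\Theta(\sigma)$ is a homomorphism simply as a composite of a homomorphism with an automorphism, and this needs no block-preservation hypothesis on $\sigma$.
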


For a fixed $k\in[1, m]$, let $\eta^{(k)}_i=\sum_{\alpha} a^{(k)}_{i\alpha} x_i\otimes x_{\alpha}=x_i\otimes x_i^{(k)}$,
then $\eta^{(k)}_j\eta^{(k)}_i=q^2\eta^{(k)}_i\eta^{(k)}_j$ for $i<j$ by using Theorem \ref{T:rel}.
Consider $\Omega_k=\sum_{i=1}^{n}\eta^{(k)}_i$, we have
\begin{equation}
\wedge^n\Omega_k=[n]_{q^2}!\eta^{(k)}_1\wedge \eta^{(k)}_2\wedge \cdots \wedge \eta^{(k)}_n,
\end{equation}
where $[n]_{q^2}!={\sum_{\sigma\in S_n}q^{2{\ell(\sigma)}}}$.

Writing out the exterior product, we have that
\begin{equation}\wedge^n \Omega_k=[n]_{q^2}!{\Det}_q^{[m]}(\mathscr{A})(x_1\wedge\cdots\wedge x_n) ^{\otimes m}.
\end{equation}
Therefore
\begin{equation}\eta^{(k)}_1\wedge \eta^{(k)}_2\wedge \cdots \wedge \eta^{(k)}_n
={\Det}_q^{[m]}(\mathscr{A})(x_1\wedge\cdots\wedge x_n) ^{\otimes m}.
\end{equation}

On the other hand, an explicit computation gives that
\begin{align}\nonumber
&\eta^{(k)}_1\wedge \cdots \wedge \eta^{(k)}_n\\
=&\sum_{\sigma_{i}\in S_{n}, i\neq k}(-q)^{\sum_{i\neq k}^{n}\ell(\sigma_i)}
\prod_{i=1}^{m} a_{\sigma_1(i)\ldots\hat{\sigma}_{k}(i)\ldots\sigma_{m}(i)}(x_1\wedge\cdots\wedge x_n) ^{\otimes m},
\end{align}
where $\hat{\sigma}_{k}$ means the omission of $\sigma_k$ at the place (or taking $\sigma_k=1$).
So we also have for any fixed $k$
\begin{equation}
{\Det}_q^{[m]}(\mathscr{A})=\sum_{\sigma_{i}\in S_{n}, i\neq k}(-q)^{\sum_{i\neq k}^{m}\ell(\sigma_i)}\prod_{j=1}^{m} a_{\sigma_1(j) \ldots\hat{\sigma}_{k}(j)\ldots\sigma_{m}(j)},
\end{equation}
which can be used to define the quantum hyperdeterminant for any $q$.

Let $I_1,I_2,\ldots,I_{m}$ be $m$ subsets of $[1, n] $ with $|I_k|=r$. The quantum $r$-minor hyperdeterminants are defined as
\begin{equation}\label{e:minor}
\xi(I_1,I_2,\ldots,I_{m})
=\sum_{(\sigma_2,\ldots,\sigma_{m})\in S_{r}^{m-1}}(-q)^{\sum_{i=2}^{m}\ell(\sigma_i)}
\prod_{i=1}^{r}
a_{i,\sigma_2(i),\ldots,\sigma_{m}(i)},
\end{equation}
where $S_{r}^{m-1}=S_r\times\cdots\times S_r\leq S_{\{r+1, \ldots, 2r, \ldots, (m-1)r+1£¬ \ldots, rm\}}$, the canonical Young subgroup,
i.e., the $j$th factor $S_r$ consists of permutations of letters $(j-1)r+1, \ldots, jr$.

For $m, l$ we can also write the generators of $\mathscr A^{[m+l]}$ as $a_{\alpha\beta}$, where
$\alpha\in[1, n]^m$ and $\beta\in[1, n]^l$. More generally we can
use any composition of $m$ to parameterize the generators
of $\mathscr A$. In particular matrix realignments of $\mathscr A^{[m]}$ are
such examples. The following result is proved by direct
computation.

\begin{theorem}The following map is an algebra homomorphism
\begin{align}
&\mathscr A^{[m+l]}\overset{\Delta}{\longrightarrow} \mathscr A^{[m+1]}\otimes \mathscr A^{[1+l]},\\
& \Delta(a_{\alpha\beta})=\sum_{j=1}^{n}a_{\alpha j}\otimes a_{j\beta}.
\end{align}

Moreover, we have an analogous Laplace expansion:
\begin{equation}
\Delta(\xi(I\times J))=\sum_K\xi(I\times K)\otimes \xi(K\times J).
\end{equation}

In particular,
\begin{equation}
\Delta(\Det_q(\mathscr A_{m+l}))=\Det_q(\mathscr A_{m+1})\otimes \Det_q(\mathscr A_{l+1}).
\end{equation}
\end{theorem}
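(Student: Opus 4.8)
The plan is to deduce all three assertions from the exterior-algebra characterisation of the defining relations in Theorem~\ref{T:rel}, together with the fact (established in the previous subsection) that each realignment map $f^{(k)}\colon\Lambda\to\mathscr A\otimes\Lambda^{\otimes m-1}$, $x_i\mapsto x_i^{(k)}$, is an algebra homomorphism. Since $\mathscr A^{[m+l]}$ is presented by the relations \eqref{re a1}--\eqref{re a2} for every realignment direction, and since Theorem~\ref{T:rel} shows these are equivalent to the statement that the vectors $z_i^{(k)}=\sum_\gamma c^{(k)}_{i\gamma}\,x_\gamma$ built from the matrix elements satisfy the quantum exterior relations, it suffices to verify those exterior relations for the images $c_{\alpha\beta}=\Delta(a_{\alpha\beta})=\sum_j a_{\alpha j}\otimes a_{j\beta}$. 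The proof of Theorem~\ref{T:rel} is a formal comparison of coefficients valid for any elements commuting with the $x$'s, so it applies verbatim to the $c_{\alpha\beta}$.

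The key computation is a Manin-style factorisation. For a realignment direction $k\le m$ lying in the first block, writing $\alpha=(i,\alpha')$ with $i$ the distinguished index, one reorganises
\begin{equation*}
z_i^{(k)}=\sum_{\alpha',\beta}\Big(\sum_j a_{i\alpha' j}\otimes a_{j\beta}\Big)(x_{\alpha'}\otimes x_{\beta})=\sum_j P_i^{\,j}\,w_j,
\end{equation*}
where $w_j=\sum_\beta a_{j\beta}x_\beta$ is the realignment-$1$ vector of the second factor $\mathscr A^{[1+l]}$ and $P_i^{\,j}=\sum_{\alpha'}a_{i\alpha' j}\,x_{\alpha'}$ carries the first-block data. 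Because $\mathscr A^{[1+l]}$ satisfies its own relations, the $w_j$ obey the exterior relations and, lying in a different tensor factor, commute with the $P_i^{\,j}$; thus substituting $x_j\mapsto w_j$ into the first-factor realignment vector $v_i=\sum_{\alpha',j}a_{i\alpha' j}\,(x_{\alpha'}\otimes x_j)$ is precisely the application of the algebra homomorphism $\mathrm{id}\otimes f^{(1)}$ for the factor $\mathscr A^{[1+l]}$. Since $v_i$ already satisfies the exterior relations (these are the defining relations of $\mathscr A^{[m+1]}$) and a homomorphism preserves them, so does $z_i^{(k)}$. The directions $k>m$ are handled by the symmetric substitution into the first factor, using that $\mathscr A^{[m+1]}$ satisfies its realignment relations with respect to its last index. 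This proves that $\Delta$ is an algebra homomorphism.

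For the Laplace expansion I would run the quantum Cauchy--Binet argument inside the exterior algebra. Wedging the $z_i^{(k)}=\sum_j P_i^{\,j}w_j$ over $i$ ranging through the first subset and extracting coefficients, one first expands $w_{k_1}\wedge\cdots\wedge w_{k_r}$ over $r$-subsets $K$ to produce the second-factor minors $\xi(K\times J)$, and then expands the resulting $K$-indexed wedge of the $P$'s to produce the first-factor minors $\xi(I\times K)$; summing over the intermediate set $K$ yields $\Delta(\xi(I\times J))=\sum_K\xi(I\times K)\otimes\xi(K\times J)$. The specialisation $I=J=[1,n]$ with $r=n$ forces $K=[1,n]$, and $\xi([1,n]^{\,m})$ is exactly the fixed-$k$ (with $k=1$) expression for $\Det_q$ derived above, so the final identity $\Delta(\Det_q(\mathscr A_{m+l}))=\Det_q(\mathscr A_{m+1})\otimes\Det_q(\mathscr A_{l+1})$ drops out immediately.

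I expect the main obstacle to be the sign and $q$-power bookkeeping in the Laplace step. One must check that terms in which the contraction indices $j$ are not distinct (so that $|K|<r$) are annihilated by the quantum antisymmetry of \eqref{re a1}, exactly as a repeated column kills a determinant, and that for the surviving terms the inversion statistic $inv$ and the length functions $\ell(\sigma_i)$ split additively across the two tensor factors in agreement with the lexicographic orderings used to define $\xi$. Keeping the tensor-factor orderings and the commutation of the two algebra copies straight throughout the wedge computation is the only delicate point; the homomorphism property of $\Delta$ proved in the first step is precisely what legitimises moving $\Delta$ inside all of the products.
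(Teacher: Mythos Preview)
Your proposal is correct, and it takes a genuinely different route from the paper's own proof.

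The paper verifies that $\Delta$ is a homomorphism by a direct coefficient computation: it expands $\sum_{\alpha\sqcup\beta=J}(-q)^{inv(\alpha,\beta)}\Delta a^{(k)}_{i\alpha}\,\Delta a^{(k)}_{i\beta}$ by writing each running multi-index as a pair $(\alpha_1,\alpha_2)$ from the two blocks, inserts the contraction indices $j,j'$, and then kills the sum by casework ($j=j'$ uses relation~\eqref{re a1} in the second tensor factor, $j\neq j'$ uses it in the first). The Laplace formula is likewise obtained by pushing $\Delta$ through the explicit sum~\eqref{e:minor}, collecting the contraction indices $j_1,\ldots,j_r$, and recognising the two minors after reordering. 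Your approach instead packages the whole computation into the exterior algebra: the factorisation $z_i^{(k)}=\sum_j P_i^{\,j}\,w_j$ and the observation that the substitution $x_j\mapsto w_j$ is literally the algebra homomorphism $f^{(1)}$ of the second factor let you transport the exterior relations for the $v_i$ (already built into $\mathscr A^{[m+1]}$) to the $z_i^{(k)}$ without ever opening the $(-q)^{inv}$ sums. What you gain is conceptual clarity and an argument that scales uniformly over all realignment directions; what the paper's bare-hands computation gains is that it makes completely explicit the two mechanisms (repeated versus distinct contraction index) by which the unwanted terms cancel, which is precisely the bookkeeping you flag as the delicate point in your Laplace step. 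Either argument is complete; yours is closer in spirit to the Manin formulation of Proposition~\ref{P:quadr}, while the paper's stays at the level of the presentation~\eqref{re a1}--\eqref{re a2}.
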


\begin{proof} For fixed $i$,
write the running indices $\alpha=\alpha_1\alpha_2$ and $\beta=\beta_1\beta_2$, then
\begin{align*}
&\sum_{\alpha,\beta}(-q)^{inv(\alpha, \beta)}\Delta a_{i\alpha} \Delta a _{i\beta}\\
=&\sum_{\alpha,\beta,j,j'}(-q)^{inv(\alpha, \beta)}(a_{i\alpha_1 j} \otimes a_{j\alpha_2 } )
(a _{i\beta_1j'}\otimes a^{(k)}_{j'\beta_2})\\
=&\sum_{\alpha,\beta,j,j'}(-q)^{inv(\alpha_1, \beta_1)+inv(\alpha_2, \beta_2)+inv(j,j')}
a_{i\alpha_1 j}^{(k)}a _{i\beta_1j'} \otimes a_{j\alpha_2 } a _{j'\beta_2}\\
\end{align*}
If $j=j'$, by definition $\sum_{\alpha_2,\beta_2}(-q)^{inv(\alpha_2, \beta_2)}
a_{j\alpha_2 }a_{j'\beta_2}=0$.
If $j\neq j'$, one has that $\sum_{\alpha_1,\beta_1,j\neq j'}(-q)^{inv(\alpha_1, \beta_1)+inv(j,j')}
a_{i\alpha_1 j}a_{i\beta_1j'}=0$.
Thus for each fixed $i$
\begin{equation}
\sum_{\alpha,\beta}(-q)^{inv(\alpha, \beta)}\Delta a_{i\alpha} \Delta a _{i\beta}=0.
\end{equation}
Similarly one can prove that $\sum_{\alpha,\beta}(-q)^{inv(\alpha, \beta)}\Delta a_{i\alpha}^{(k)}\Delta a^{(k)}_{i\beta}=0$ for any $k$ and
$\sum_{\alpha\sqcup\beta=J}(-q)^{inv(\alpha, \beta)}\Delta a_{j\alpha}^{(k)}\Delta a_{i\beta}^{(k)}=-q\sum_{\alpha\sqcup\beta=J}(-q)^{inv(\alpha, \beta)}\Delta  a_{i\alpha}^{(k)}
 \Delta a_{j\beta}^{(k)}.$
Thus $\Delta$ is an algebra homomorphism. Moreover, we have that
\begin{align}\nonumber
\Delta \xi(I\times J)
&=\frac{1}{[n]_{q^2}!} \sum_{\sigma} (-q)^{\sum_{i=1}^{m}\ell(\sigma_i)}
\prod_{i=1}^{r} \sum_{j=1}^{n}a_{\sigma_1(i),\ldots,\sigma_{m}(i),j}\otimes a_{j,\sigma_m(i),\ldots,\sigma_{m+l1}(i)}\\
&=\frac{1}{[n]_{q^2}!} \sum_{\sigma} (-q)^{\sum_{i=1}^{m}\ell(\sigma_i)}
\prod_{i=1}^{r} a_{\sigma_1(i),\ldots,\sigma_{m}(i),j_r}\otimes \prod_{i=1}^{r}a_{j_r,\sigma_m(i),\ldots,\sigma_{m+l1}(i)}
\end{align}
where the summation runs over all $\sigma=(\sigma_1, \ldots, \sigma_{m+l})\in S_{r}^{m+l}$ and $j_1,\ldots,j_r\in [1,n]$.
For distinct $j_1,\ldots,j_r$, one has that $$\sum_{} (-q)^{\sum_{i=m+1}^{m+l}\ell(\sigma_i)}\prod_{i=1}^{r}a_{j_r,\sigma_{m+1}(i),\ldots,\sigma_{m+l}(i)}
=(-q)^{inv(j_1,\ldots,j_r)}\xi(K\times J), $$
where $K=\{j_1,\ldots,j_r\}$.
Then
\begin{equation}
\begin{split}
&\Delta \xi(I\times J)\\
=&\frac{1}{[n]_{q^2}!} \sum_{} (-q)^{\sum_{i=1}^{m}\ell(\sigma_i)+{inv(j_1,\ldots,j_r)}}
\prod_{i=1}^{r} a_{\sigma_1(i),\ldots,\sigma_{m}(i),j_i}\otimes
\xi(K\times J)\\
=&\sum_{K}\xi(I\times K)\otimes\xi(K\times J),
\end{split}
\end{equation}
where $K$ runs over all subsets $K$ of $[1,n]$ such that $|K|=r$. 

Let $I\times J=[1,n]^{m+l}$, one has that
\begin{equation}
\Delta(\Det_q(\mathscr A_{m+l}))=\Det_q(\mathscr A_{m+1})\otimes \Det_q(\mathscr A_{l+1}).
\end{equation}
\end{proof}

\subsection{Laplace expansions of $q$-hyperdeterminants and Pl\"ucker relations.}
 As in the classical case the quantum hyperdeterminant can also be expanded into a
 sum of quantum hyper-minors and the complement hyper-minors. The Laplace expansion
 is similarly proved by the exterior products. In the following we only state the results for the first folding,
 while it is obvious that there are similarly $m$ ways to expand just like there are two equivalent ways to expand
 in rows and columns for the rectangle matrices.

 The following generalizes a result of \cite{TT} and \cite{KL}.
\begin{proposition}  For any $1\leq j_1, \ldots,j_m\leq n$, one has that
\begin{align}\nonumber
&\sum_{\sigma_{2},\ldots,\sigma_{m}\in S_{n}}(-q)^{\sum_{i=2}^{m}\ell(\sigma_i)}\prod_{i=1}^{m} a_{j_i,\sigma_2(i),\ldots,\sigma_{m}(i)}\\
&\quad\qquad =\left\{ \begin{aligned}
&0,\ & \mbox{if two $j_k$'s\ coinside},\\
&(-q)^{\ell(\pi)}{\Det}_q^{[m]}(\mathscr{A})\ &\mbox{if $j_k$'s are distinct},\\
\end{aligned} \right.
\end{align}
where $\pi=\begin{pmatrix} 1 &2 &\ldots &m\\
j_1 &j_2 &\cdots &j_m\end{pmatrix}$.
\end{proposition}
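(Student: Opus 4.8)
The plan is to reduce the statement to the basic alternating property of the quantum exterior algebra, applied not to the generators $x_i$ but to the transformed elements $x_i^{(1)}=\sum_{\al}a_{i\al}^{(1)}x_{\al}$ attached to the first folding. By Theorem \ref{T:rel} (with $k=1$) these elements obey exactly the relations \eqref{qwedge1}--\eqref{qwedge2}, namely $x_j^{(1)}x_i^{(1)}=-qx_i^{(1)}x_j^{(1)}$ for $i<j$ and $x_i^{(1)}x_i^{(1)}=0$. Consequently any $n$-fold product $x_{j_1}^{(1)}x_{j_2}^{(1)}\cdots x_{j_n}^{(1)}$ is governed by the same combinatorics as a product of the $x_i$ in $\Lambda$, and the whole proposition will fall out by comparing two evaluations of this product.

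First I would record the direct expansion. Multiplying out $x_{j_i}^{(1)}=\sum_{\al}a_{j_i\al}^{(1)}x_{\al}$, the product lands in $(\Lambda^1)^{\otimes(m-1)}$, where in each of the $m-1$ tensor slots the top-degree piece is one-dimensional, spanned by $x_1\wedge\cdots\wedge x_n$. Hence only the terms survive in which the indices occurring in a given slot form a permutation of $[1,n]$, and collecting signs exactly as in the computation that produced the $\eta^{(k)}$-formula for the hyperdeterminant yields
\begin{equation*}
x_{j_1}^{(1)}\cdots x_{j_n}^{(1)}=\Big(\sum_{\sigma_2,\ldots,\sigma_m\in S_n}(-q)^{\sum_{i=2}^m\ell(\sigma_i)}\prod_{i=1}^n a_{j_i,\sigma_2(i),\ldots,\sigma_m(i)}\Big)(x_1\wedge\cdots\wedge x_n)^{\otimes(m-1)}.
\end{equation*}
The parenthesized scalar is precisely the left-hand side of the proposition, and specializing $(j_1,\ldots,j_n)=(1,\ldots,n)$ recovers $x_1^{(1)}\cdots x_n^{(1)}=\Det_q^{[m]}(\mathscr A)(x_1\wedge\cdots\wedge x_n)^{\otimes(m-1)}$ by the fixed-$k$ expression for the hyperdeterminant derived above.

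Next I would invoke the alternating property purely formally. If two indices coincide, say $j_a=j_b$, the generator $x_{j_b}^{(1)}$ can be transported next to $x_{j_a}^{(1)}$ at the cost of a nonzero scalar (each transposition multiplies by $-q$), whereupon $x_{j_a}^{(1)}x_{j_a}^{(1)}=0$ forces the product to vanish. When the $j_i$ are distinct they constitute a permutation $\pi$, and sorting them into increasing order via $x_j^{(1)}x_i^{(1)}=-qx_i^{(1)}x_j^{(1)}$ ($i<j$) accumulates exactly $(-q)^{\ell(\pi)}$, so that $x_{j_1}^{(1)}\cdots x_{j_n}^{(1)}=(-q)^{\ell(\pi)}x_1^{(1)}\cdots x_n^{(1)}$.

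Finally I would combine the two evaluations and compare coefficients of the nonzero basis vector $(x_1\wedge\cdots\wedge x_n)^{\otimes(m-1)}$, which delivers both cases of the statement simultaneously; this is the higher-dimensional analogue of the row-antisymmetry of $\det_q$ underlying the results of \cite{TT} and \cite{KL}. The only step demanding genuine care—and hence the main obstacle—is the sign bookkeeping in the direct expansion: one must verify that the graded-tensor-product signs together with the inversion exponents $inv(\al,\be)$ assemble into the single exponent $\sum_{i=2}^m\ell(\sigma_i)$ appearing in the definition. This is, however, identical to the computation already carried out in establishing $\eta_1^{(k)}\wedge\cdots\wedge\eta_n^{(k)}=\Det_q^{[m]}(\mathscr A)(x_1\wedge\cdots\wedge x_n)^{\otimes m}$, so it amounts to routine verification rather than a new difficulty.
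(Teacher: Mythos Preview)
Your proposal is correct and follows essentially the same route as the paper: both arguments write $\omega_j:=x_j^{(1)}$, expand $\omega_{j_1}\wedge\cdots\wedge\omega_{j_n}$ directly to extract the left-hand side as the coefficient of $(x_1\wedge\cdots\wedge x_n)^{\otimes(m-1)}$, and then invoke the $q$-exterior relations satisfied by the $\omega_j$ (Theorem~\ref{T:rel}) to obtain the vanishing in the repeated-index case and the factor $(-q)^{\ell(\pi)}$ otherwise. Your explicit reduction of the sign bookkeeping to the already-performed $\eta^{(k)}$ computation is exactly how the paper treats it as well.
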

\begin{proof} In the first folding, we simply write $x^{(1)}_j=\omega_j$. It is clear that $\omega_{j_1} \wedge \omega_{j_2}\wedge\cdots \wedge \omega_{j_n}=0$
 whenever two indices coincide.
 For any permutation $\pi$ one has that
\begin{equation}
\omega_{{\pi}_1} \wedge \omega_{\pi_2}\wedge \cdots \wedge \omega_{\pi_n}=(-q)^{\ell(\pi)}\omega_{1} \wedge \omega_{2}\wedge \cdots \wedge \omega_{n}.
\end{equation}
For any composition $(j_1j_2\ldots j_n)$ we compute that
\begin{align*}
&\omega_{{j}_1} \wedge \omega_{j_2}\wedge \cdots \wedge \omega_{j_n}\\
&=(-q)^{\ell(\pi)}\sum_{\sigma_{2},\ldots,\sigma_{m}\in S_{n}}(-q)^{\sum_{i=2}^{m}\ell(\sigma_i)}\prod_{i=1}^{m} a_{j_i,\sigma_2(i),\ldots,\sigma_{m}(i)}(x_1\wedge\cdots\wedge x_n)^{\otimes m-1}.
\end{align*}
So the proposition is proved.
\end{proof}

Now we discuss the Laplace expansion of the quantum hyperdeterminant. We first choose r indices $i_1<i_2<\cdots<i_r$ from $1,2,\ldots,n$ and
let the remaining ones be ${i_{r+1} < i_{r+2} < \cdots < i_n}$. We have
\begin{equation}
\omega_{i_1} \wedge \omega_{i_2}\wedge \cdots \wedge \omega_{i_n}=(-q)^{(i_1+\cdots+i_r-\frac{r(r+1)}{2})}\omega_1 \wedge \omega_2\wedge \cdots \wedge \omega_n
\end{equation}
since $\omega_j \wedge \omega_i=-q\omega_i \wedge \omega_j$, if $i<j$. For any $r$-element subset $J$, we define $\ell(J)=\sum_{i\in I}i-|J|(|J|+1)/2$.

Now let $I_1, \ldots, I_{m}$ be $r$-element subsets of $[1, n]$, then $I_i'=[1, n]-I_i$ are $(n-r)$-element subsets. We compute that

\begin {equation}
\begin{split}
&\omega_{i_1} \wedge \omega_{i_2}\wedge \cdots \wedge \omega_{i_n}\\
=&(\omega_{i_1}\wedge \cdots \wedge w_{i_r})\wedge(w_{i_{r+1}}\wedge \cdots \wedge \omega_{i_n})\\
=&\sum_{I_2,\ldots,I_{m}\in P_n}\xi({I_1,\ldots I_{m}})x_{I_1}\otimes\cdots\otimes x_{I_r}\wedge
\xi({I_{1}'\ldots I_{m}'})x_{I_1'}\otimes\cdots\otimes x_{I_{m}'}\\
=&\sum_{I_2,\ldots,I_{m}\in P_n}(-q)^{\sum_{j=2}^{m}\ell(I_j)}\xi({I_1,\ldots I_{m}})\xi({I_{1}'\ldots I_{m}'})(x_1\wedge\cdots\wedge x_n)^{\otimes m-1}\\
\end{split}
\end{equation}
By comparing two equations it follows that
\begin{proposition} For any $r$-element subsets $I_1, \ldots, I_{m}$ of $[1, n]$, we have
\begin{equation}
{\Det}_q^{[m]}(\mathscr{A})=\sum_{I_2,\ldots,I_{m}\in P_n}(-q)^{\sum_{j=2}^{m}\ell(I_j)-\ell(I_1)}\xi({I_1,\ldots I_{m}})\xi({I'_{1}\ldots I_{m}'})
\end{equation}
\end{proposition}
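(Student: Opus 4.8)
The plan is to evaluate the single $n$-fold wedge $\omega_{i_1}\wedge\cdots\wedge\omega_{i_n}$ in two ways and compare, where $\omega_j=x_j^{(1)}$ as in the preceding Proposition. Fix the $r$-element subset $I_1=\{i_1<\cdots<i_r\}$ and list its complement as $I_1'=\{i_{r+1}<\cdots<i_n\}$, so that $(i_1,\ldots,i_n)$ is a permutation $\pi$ of $(1,\ldots,n)$ with $\ell(\pi)=\ell(I_1)$. Evaluating the wedge globally, the preceding Proposition applied to this sequence gives at once
\begin{equation*}
\omega_{i_1}\wedge\cdots\wedge\omega_{i_n}=(-q)^{\ell(I_1)}{\Det}_q^{[m]}(\mathscr A)\,(x_1\wedge\cdots\wedge x_n)^{\otimes m-1}.
\end{equation*}
This is the target left-hand side once we divide through by $(-q)^{\ell(I_1)}$.

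Next I would evaluate the same wedge locally, splitting it as $(\omega_{i_1}\wedge\cdots\wedge\omega_{i_r})\wedge(\omega_{i_{r+1}}\wedge\cdots\wedge\omega_{i_n})$. Expanding the first block exactly as in the preceding Proposition, but summing only over the $r$ rows of $I_1$, produces $\sum_{I_2,\ldots,I_m}\xi(I_1,I_2,\ldots,I_m)\,x_{I_2}\otimes\cdots\otimes x_{I_m}$, where each $I_j$ ($j\ge2$) runs over the $r$-element subsets of $[1,n]$ supplying the columns in the $(j-1)$-st tensor slot and the coefficient is the minor hyperdeterminant of \eqref{e:minor}. Likewise the second block yields $\sum_{K_2,\ldots,K_m}\xi(I_1',K_2,\ldots,K_m)\,x_{K_2}\otimes\cdots\otimes x_{K_m}$ with each $K_j$ an $(n-r)$-element subset. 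Multiplying the two expansions, the $(j-1)$-st tensor slot contributes $x_{I_j}\wedge x_{K_j}$, which vanishes unless $K_j=I_j':=[1,n]\setminus I_j$ and otherwise equals $(-q)^{\ell(I_j)}x_1\wedge\cdots\wedge x_n$ by the reordering count $\ell(I_j)=\sum_{i\in I_j}i-r(r+1)/2$. Hence every off-diagonal term drops out and the local evaluation collapses to
\begin{equation*}
\sum_{I_2,\ldots,I_m\in P_n}(-q)^{\sum_{j=2}^m\ell(I_j)}\,\xi(I_1,\ldots,I_m)\,\xi(I_1',\ldots,I_m')\,(x_1\wedge\cdots\wedge x_n)^{\otimes m-1}.
\end{equation*}
Equating the two evaluations, cancelling the common volume element $(x_1\wedge\cdots\wedge x_n)^{\otimes m-1}$, and dividing by $(-q)^{\ell(I_1)}$ yields the asserted identity.

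The reordering sign counts are routine and have already been used verbatim in the text. The step that carries the real content of the Laplace expansion---and the one I expect to need the most care---is the local multiplication: one must verify that the product of the two block expansions factorizes slot by slot across the $m-1$ tensor components, that the off-diagonal choices $K_j\neq I_j'$ are annihilated by repetition in the quantum exterior algebra \eqref{qwedge2}, and that the single surviving term in each slot contributes exactly $(-q)^{\ell(I_j)}$ and reconstitutes $\xi(I_1',\ldots,I_m')$ rather than a permuted or rescaled complementary minor. A minor bookkeeping point to confirm alongside this is that the coefficient extracted from each block is the minor \eqref{e:minor} with the intended row set ($I_1$ for the first block, $I_1'$ for the second) in place of the normalized rows $\{1,\ldots,r\}$ appearing in the definition.
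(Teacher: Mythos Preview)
Your proposal is correct and follows essentially the same route as the paper: compute $\omega_{i_1}\wedge\cdots\wedge\omega_{i_n}$ once by reordering to $\omega_1\wedge\cdots\wedge\omega_n$ (picking up $(-q)^{\ell(I_1)}$ and then ${\Det}_q^{[m]}$), and once by splitting into the two blocks $(\omega_{i_1}\wedge\cdots\wedge\omega_{i_r})\wedge(\omega_{i_{r+1}}\wedge\cdots\wedge\omega_{i_n})$, expanding each as a sum of minors times $x_{I_j}$'s, and collapsing the slotwise products $x_{I_j}\wedge x_{K_j}$ to the single surviving term $K_j=I_j'$ with sign $(-q)^{\ell(I_j)}$. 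The bookkeeping concerns you flag (slotwise factorization, the interpretation of $\xi(I_1,\ldots)$ with row set $I_1$ rather than $\{1,\ldots,r\}$) are exactly the points the paper passes over without comment, so your level of care is if anything slightly higher.
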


Let $K=\{k_1, \ldots, k_n\}$ be an $n$-element subset of $[1, 2n]$ such that
$k_1<k_2<\ldots<k_n$. We define $\sigma_K$ to be the
permutation $k_1\ldots k_nk_1'\ldots k_n'$ of $S_{2n}$, where $K'=[1, 2n]-K=\{k_1', \ldots, k_n'\}$ such that $k_1'<k_2'<\ldots<k_n'$,
i.e. $\sigma_K$ is a shuffle.

\begin{proposition}\label{thp1} Let $K_1$ be an $n$-element subset of $[1, 2n]$ such that its elements
 $i_k=k$ for $1\leq k\leq r< n$, $i_{r+1}<i_{r+2}<\cdots<i_{n}$.
Let $K_2, \ldots, K_{m}$ be arbitrary $n$-element subsets of $[1, 2n]$.   Then
\begin{equation}
\begin{split}
&\sum_{I_k}(-q)^{\sum_{t=2}^{m}\ell(\sigma_t)}\xi(I,K_2,\ldots,K_{m})\xi(I,K_2',\ldots,K_{m}')=0,\\
\end{split}
\end{equation}
\begin{equation}
\begin{split}
&\sum_{I_k}(-q)^{-\sum_{t=2}^{m}\ell(\sigma_t)}\xi(I,K_2',\ldots,I_{m}')\xi(I,K_2,\ldots,K_{m})=0,\\
\end{split}
\end{equation}
where $\sigma_t=\sigma_{K_t}$, the permutation associated with $K_t$.
\end{proposition}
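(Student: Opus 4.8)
The plan is to prove both identities inside the quantum exterior algebra, exactly as in the preceding Laplace expansions, working now with the ambient index set $[1,2n]$ (format $(2n)^m$). Passing to the first folding, set $\omega_j=x^{(1)}_j=\sum_\alpha a^{(1)}_{j\alpha}x_\alpha$ for $j\in[1,2n]$. Since $f^{(1)}$ is an algebra homomorphism and the $x_i$ obey (\ref{qwedge1})--(\ref{qwedge2}), the $\omega_j$ satisfy $\omega_b\wedge\omega_a=-q\,\omega_a\wedge\omega_b$ for $a<b$ and $\omega_a\wedge\omega_a=0$ in $\mathscr A\otimes\Lambda_{2n}^{\otimes m-1}$. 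As in the proof of the Laplace expansion, wedging the vectors indexed by an $n$-set $I=\{i_1<\cdots<i_n\}$ gives
\begin{equation*}
\omega_{i_1}\wedge\cdots\wedge\omega_{i_n}=\sum_{K_2,\ldots,K_m}\xi(I,K_2,\ldots,K_m)\,x_{K_2}\otimes\cdots\otimes x_{K_m},
\end{equation*}
so every minor appearing in the statement is realized as the coefficient of a monomial $x_{K_2}\otimes\cdots\otimes x_{K_m}$ in a decomposable wedge of the $\omega$'s.

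First I would assemble the left-hand side as a single coefficient of a wedge that vanishes for formal reasons. In each folding $t\in\{2,\dots,m\}$ the complementation $K_t\sqcup K_t'=[1,2n]$ gives $x_{K_t}\wedge x_{K_t'}=(-q)^{\ell(\sigma_t)}x_{[1,2n]}$ with $\sigma_t=\sigma_{K_t}$, so a slotwise wedge of two $\omega$-expansions reproduces the products $\xi(I,K_2,\dots,K_m)\,\xi(I,K_2',\dots,K_m')$ carrying precisely the weight $(-q)^{\sum_{t\ge 2}\ell(\sigma_t)}$, as the coefficient of the top form $(x_{[1,2n]})^{\otimes m-1}$. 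The summation over the $n$-subsets $I$ is then obtained by collecting the terms of a single full first-folding wedge $\omega_{c_1}\wedge\cdots\wedge\omega_{c_{2n}}$ according to their complementary index pattern. The point of the hypothesis on $K_1$ --- that $i_k=k$ for $k\le r$ with $r<n$ --- is that it pins an initial block of first-folding indices; after the collection above this block is forced to repeat, producing a factor $\omega_a\wedge\omega_a=0$. This is exactly the quantum Grassmann--Pl\"ucker syzygy, and it makes the whole coefficient, hence the stated alternating sum, vanish.

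The main obstacle will be the $q$-power bookkeeping. Three families of signs must be reconciled: the inversion weights $(-q)^{\mathrm{inv}(\alpha,\beta)}$ built into the definition of each $\xi$, the shuffle signs $(-q)^{\ell(\sigma_t)}$ coming from $x_{K_t}\wedge x_{K_t'}=(-q)^{\ell(\sigma_t)}x_{[1,2n]}$ in folding $t$, and the reordering signs produced when the first-folding vectors $\omega_j$ are permuted into standard order. I expect these to combine so that the total weight of each surviving term is exactly $(-q)^{\sum_{t=2}^{m}\ell(\sigma_t)}$, matching the statement; verifying this cancellation slot by slot, and checking that the normalizations $[n]_{q^2}!$ implicit in the minors do not interfere, is the technical heart of the argument.

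Finally, the second identity need not be proved separately: it follows from the first by the built-in duality of $\mathscr A$. Applying the automorphism $\Theta(\sigma)$ for the appropriate $\sigma\in S_m$ together with the substitution $q\mapsto q^{-1}$ (equivalently, passing to the opposite multiplication, which reverses each product and sends $(-q)^{\ell(\sigma_t)}$ to $(-q)^{-\ell(\sigma_t)}$) interchanges the two families $\xi(I,K_2,\ldots,K_m)$ and $\xi(I,K_2',\ldots,K_m')$ and turns the first relation into the second. Hence once the first vanishing is established, the second is automatic.
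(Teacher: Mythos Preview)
Your overall framework is right---realize the sum as the coefficient of the top form $(x_{[1,2n]})^{\otimes(m-1)}$ in a wedge of first-folding vectors and show that wedge vanishes---but the vanishing mechanism you describe is not the one that works. You claim the hypothesis on $K_1$ forces a literal repetition $\omega_a\wedge\omega_a$ among the first-folding vectors. It does not: the first index set $I=[1,n]$ is the same in both $\xi$-factors, but the two blocks of the relevant wedge are \emph{not} built from the same vectors. The paper introduces \emph{truncated} vectors $\omega_{ir}$ (second running index restricted to $[1,r]$) and $\omega_{ir}'=\omega_i-\omega_{ir}$, establishes $q$-commutation relations among them, and shows
\[
\omega_1\wedge\cdots\wedge\omega_n\wedge\omega_{1r}'\wedge\cdots\wedge\omega_{nr}'
=\omega_{1r}\wedge\cdots\wedge\omega_{nr}\wedge\omega_{1r}'\wedge\cdots\wedge\omega_{nr}'=0,
\]
the last equality by pigeonhole in the \emph{second} tensor slot (there are $n$ factors but only $r<n$ available second indices). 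Without this splitting you cannot isolate the correct restricted sum: the naive wedge $\omega_I\wedge\omega_I$ of full vectors gives only the unrestricted sum over all $K_t$, which is a different and essentially tautological relation. The role of $r$ is invisible in your sketch, and that is precisely where the content lies.

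For the second identity, the paper simply swaps the two blocks and repeats the computation. Your proposed shortcut via $\Theta(\sigma)$ and $q\mapsto q^{-1}$ conflates two different operations: $\Theta(\sigma)$ permutes the $m$ index slots, whereas the two identities differ by reversing the order of the product of the two $\xi$'s in $\mathscr A$. Passing to the opposite algebra together with $q\mapsto q^{-1}$ is the relevant involution, but you would then need to check that the defining relations \eqref{re a1}--\eqref{re a2} are stable under it; this is true but is an extra verification, not a free consequence of the $S_m$-action.
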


\begin{proof} For any $i, r\in I=[1, n]$, let
\begin{align}
\omega_i&=\sum_{\alpha}a_{i\alpha}x_{\alpha}=\sum_{i_2, \ldots, i_m=1}^n a_{i,i_2,\ldots,i_{m}} x_{i_2}\otimes\cdots\otimes x_{i_{m}},\\
\omega_{ir}&=\sum_{\beta}a_{i\beta}x_{\beta}=\sum_{i_2=1}^r\sum_{i_3, \ldots, i_m=1}^n a_{i,i_2,\ldots,i_{m}}x_{i_2}\otimes\cdots\otimes x_{i_{m}},
\end{align}
and let $\omega_{ir}'=\omega_{i}-\omega_{ir}$. Since the second running index is from $1$ to $r$ ($r<n$), one has that
$\omega_{1r}\wedge\cdots \wedge\omega_{nr}=0$. Also it is easy to see that for any $i<j$
\begin{align}
&\omega_{jr}\wedge\omega_{ir}=-q\omega_{ir}\wedge\omega_{jr}, \qquad \omega_{jr}'\wedge\omega_{ir}'=-q\omega_{ir}'\wedge\omega_{jr}',\\
&\omega_{ir}'\wedge\omega_{jr}=-q\omega_{jr}\wedge\omega_{ir}',\\
&\omega_{i}\wedge\omega_{jr}=-\omega_{jr}\wedge(q^{-1}\omega_{ir}+q\omega_{ir}').
\end{align}
Now for $r<n$, we have that
\begin{equation}\label{p11}
\begin{split}
&\omega_1\wedge\omega_2\wedge\cdots\wedge\omega_n\wedge\omega_{1r}'\wedge\cdots\wedge\omega_{nr}'\\
=&(\omega_{1r}+\omega_{1r}')\wedge\cdots\wedge (\omega_{nr}+\omega_{nr}')\wedge\omega_{1r}'\wedge\cdots\wedge\omega_{nr}'\\
=&\omega_{1r}\wedge\cdots\wedge\omega_{nr}\wedge\omega_{1r}'\wedge\cdots\wedge\omega_{nr}'=0
\end{split}
\end{equation}

On the other hand,
\begin{equation}\label{p13}
\begin{split}
&\omega_1\wedge\omega_2\wedge\cdots\wedge\omega_n\wedge\omega_{1r}'\wedge\cdots\wedge\omega_{nr}'\\
=&\sum_{K_t}(-q)^{\sum_{t=2}^{m}\ell(\sigma_t)}\xi(I,K_2,\ldots,K_{m})\xi(I,K_2',\ldots,K_{m}')(x_1\wedge\cdots\wedge x_{2n})^{\otimes (m-1)}\\
\end{split}
\end{equation}
Comparing (\ref{p11}) and (\ref{p13}), we prove the first equation. The second equation follows similarly from (\ref{p13}).
\end{proof}

\begin{proposition}\label{thp3}
Under the same hypothesis of Prop. \ref{thp1}, one has that

\begin{equation}
\begin{split}
&\sum_{K_t}(-q)^{\sum_{t=2}^{m}\ell(\sigma_t)}\xi(I',K_2,\ldots,K_{m})\xi(I,K_2',\ldots,K_{m}')\\
=&(-q)^{n^2-2nr}\sum_{K_t}(-q)^{\sum_{t=2}^{m}n^2-\ell(\sigma_t)}\xi(I,K_2',\ldots,K_{m}')\xi(I',K_2,\ldots,K_{m}).\\
\end{split}
\end{equation}
\end{proposition}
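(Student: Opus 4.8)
The plan is to follow the two-way wedge computation used for Proposition \ref{thp1}, but now with the two row-sets $I=[1,n]$ and $I'=[1,2n]\setminus I$ attached to the two minor factors; because these row-sets differ the computation will yield a reordering relation rather than a vanishing. Keeping the notation $\omega_i=x_i^{(1)}$, $\omega_{ir}$, $\omega_{ir}'=\omega_i-\omega_{ir}$ from that proof, I would work with the two blocks $\omega_{I'}=\omega_{n+1}\wedge\cdots\wedge\omega_{2n}$ (untruncated, rows in $I'$) and $\omega_{Ir}'=\omega_{1r}'\wedge\cdots\wedge\omega_{nr}'$ (rows in $I$, second slot truncated to $\{r+1,\ldots,2n\}$), and study the top-degree element $\omega_{I'}\wedge\omega_{Ir}'$.

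First I would expand $\omega_{I'}\wedge\omega_{Ir}'$ by the Laplace rule exactly as in (\ref{p13}): each block expands into hyper-minors with coefficient one on each $\xi$, the surviving terms are those whose slot-subsets are complementary, and the factors $x_{K_t}\wedge x_{K_t'}=(-q)^{\ell(\sigma_{K_t})}x_1\wedge\cdots\wedge x_{2n}$ produce
\begin{equation*}
\omega_{I'}\wedge\omega_{Ir}'=\Big(\sum_{K_t}(-q)^{\sum_{t=2}^{m}\ell(\sigma_t)}\xi(I',K_2,\ldots,K_m)\,\xi(I,K_2',\ldots,K_m')\Big)(x_1\wedge\cdots\wedge x_{2n})^{\otimes(m-1)},
\end{equation*}
which is the left-hand side. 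Expanding $\omega_{Ir}'\wedge\omega_{I'}$ in the opposite order gives the same sum with the two minors transposed and with $\ell(\sigma_{K_t})$ replaced by $\ell(\sigma_{K_t'})=n^2-\ell(\sigma_{K_t})$ (from $\ell(\sigma_{K})+\ell(\sigma_{K'})=n^2$ for complementary $n$-subsets of $[1,2n]$); this is precisely the sum appearing on the right-hand side before the prefactor $(-q)^{n^2-2nr}$. The truncation of the second slot is what restricts the slot-subsets as in the hypothesis on $K_1$.

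The crux, and the step I expect to be the main obstacle, is the clean block-reordering identity $\omega_{I'}\wedge\omega_{Ir}'=(-q)^{n^2-2nr}\,\omega_{Ir}'\wedge\omega_{I'}$. The element-wise relation $\omega_i\wedge\omega_{jr}=-\omega_{jr}\wedge(q^{-1}\omega_{ir}+q\omega_{ir}')$ is not a pure $q$-commutation, so I would first substitute $\omega_j=\omega_{jr}+\omega_{jr}'$ in every factor of $\omega_{I'}$, turning that block into a sum of wedges of pure-type factors. Against pure-type factors the moves are clean: the relations $\omega_{ir}'\wedge\omega_{jr}=-q\,\omega_{jr}\wedge\omega_{ir}'$ and $\omega_{jr}'\wedge\omega_{ir}'=-q\,\omega_{ir}'\wedge\omega_{jr}'$ (for $i<j$) from the proof of Proposition \ref{thp1} give factors $-q^{-1}$ and $-q$ respectively when an $\omega_{ir}'$ is slid leftward.

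The decisive observation is a degree count in the truncated slot: a term in which $p$ of the $I'$-factors are of type $\omega_{jr}$ supplies exactly $p$ indices from $\{1,\ldots,r\}$ and $2n-p$ from $\{r+1,\ldots,2n\}$, so reaching the top form $x_1\wedge\cdots\wedge x_{2n}$ forces $p\ge r$ and $2n-p\ge 2n-r$, hence $p=r$ for every surviving term (and all $p\ne r$ terms vanish in both orderings, keeping the manipulation consistent). For such a term, sliding each of the $n$ factors $\omega_{ir}'$ past its $r$ factors $\omega_{jr}$ and its $n-r$ factors $\omega_{jr}'$ contributes the term-independent scalar $\big((-q^{-1})^{r}(-q)^{n-r}\big)^{n}=(-q)^{n^2-2nr}$. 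Summing over terms yields the reordering identity, and comparing it with the two Laplace expansions above gives the claimed equality.
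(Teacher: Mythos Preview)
Your proposal is correct and follows essentially the same route as the paper: compute the two Laplace expansions of $\omega_{I'}\wedge\omega_{Ir}'$ and $\omega_{Ir}'\wedge\omega_{I'}$, and relate them via a block-reordering identity whose scalar is pinned down by the second-slot degree count forcing exactly $r$ factors of type $\omega_{jr}$. The only cosmetic difference is that the paper uses the combined relation $\omega_j\wedge\omega_{ir}'=\omega_{ir}'\wedge((-q)^{-1}\omega_{jr}+(-q)\omega_{jr}')$ to slide past the untruncated $\omega_j$'s directly (arriving at $(-q)^{-2n}\omega_{jr}+\omega_{jr}'$ after extracting $(-q)^{n^2}$) rather than first expanding $\omega_j=\omega_{jr}+\omega_{jr}'$ as you do, but the same degree count then finishes both arguments.
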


\begin{proof}
It is clear that for any $i<j$
$$\omega_j\wedge\omega_{ir}'=\omega_{ir}'\wedge((-q)^{-1}\omega_{jr}+(-q)\omega_{jr}').$$
So we have
\begin{equation}\label{p21}
\begin{split}
&\omega_{n+1}\wedge\cdots\wedge \omega_{2n}\wedge \omega_{1r}'\wedge \omega_{2r}'\wedge\cdots\wedge \omega_{nr}'\\
=&\omega_{1r}'\wedge \omega_{2r}'\wedge\cdots\wedge \omega_{nr}'\wedge((-q)^{-n}\omega_{n+1,r}+(-q)^{n}\omega_{n+1,r}')\\
&\qquad \qquad \wedge\cdots\wedge((-q)^{-n}\omega_{2n,r}+(-q)^{n}\omega_{2n,r}')\\
=&(-q)^{n^2}\omega_{1r}'\wedge \omega_{2r}'\wedge\cdots\wedge \omega_{nr}'\wedge((-q)^{-2n}\omega_{n+1,r}+\omega_{n+1,r}')\wedge\cdots \\ &\qquad\qquad \wedge((-q)^{-2n}\omega_{2n,r}+\omega_{2n,r}')\\
=&(-q)^{n^2-2nr}\omega_{1r}'\wedge \omega_{2r}'\wedge\cdots\wedge \omega_{nr}'\wedge \omega_{n+1}\wedge\cdots\wedge \omega_{2n}.
\end{split}
\end{equation}
On the other hand, we can expand the wedge product of $\omega_i$'s and $\omega'_j$'s as follows.
\begin{equation}\label{p22}
\begin{split}
&\omega_{n+1}\wedge\cdots\wedge \omega_{2n}\wedge \omega_1'\wedge \omega_2'\wedge\cdots\wedge \omega_n'\\
=&\sum_{K_t}(-q)^{\sum_{t=2}^{m}\ell(\sigma_t)}\xi(I',K_2,\ldots,K_{m})\xi(I,K_2',\ldots,K_{m}')(x_1\wedge\cdots\wedge x_{2n})^{\otimes m-1},
\end{split}
\end{equation}
\begin{equation}\label{p23}
\begin{split}
&\omega_1'\wedge \omega_2'\wedge\cdots\wedge \omega_n'\wedge \omega_{n+1}\wedge\cdots\wedge \omega_{2n}\\
=&\sum_{K_t}(-q)^{\sum_{t=2}^{m}n^2-\ell(\sigma_t)}\xi(I,K_2',\ldots,K_{m}')\xi(I',K_2,\ldots,K_{m})(x_1\wedge\cdots\wedge x_{2n})^{\otimes m-1},
\end{split}
\end{equation}
Comparing (\ref{p21})--(\ref{p23}), we obtain the proposition.
\end{proof}

\subsection{Coaction of $A$ on $\mathscr{A}$.}
We can directly verify that $\mathscr{A}$ has a left $A$-comodule structure given by
\begin{equation}L_{G}^{(k)}(a_{i\alpha}^{(k)})=\sum_{j=1}^{n}a_{ij}\otimes a_{j\alpha}^{(k)}.
\end{equation}
Moreover $L_A$ is an algebra homomorphism, so $\mathscr{A}$ is an $A$-comodule-algebra. Similarly, $\mathscr{A}$ has a right
$A$-comodule-algebra structure given by
\begin{equation}
R_G^{(k)}(a_{i\alpha }^{(k)})=\sum_{j=1}^{n} a_{j\alpha }^{(k)}\otimes a_{ji}.
\end{equation}
Here the quantum algebra $\Mat_q(n)$ is endowed with the comultiplication
$\Delta(a_{ij})=\sum_la_{il}\otimes a_{lj}$.
The following result follows easily by definition.
\begin{proposition} For any $\sigma\in S_m$
\begin{align}
(1\otimes \sigma)L_G^{(k)}\sigma^{-1}&=L_G^{(\sigma(k))},\\
(\sigma \otimes 1)R_G^{(k)}\sigma^{-1}&=R_G^{(\sigma(k))}.
\end{align}
\end{proposition}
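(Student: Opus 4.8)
The plan is to reduce each identity to a check on the generators $a_{i_1\ldots i_m}$ and then carry out the index bookkeeping explicitly. First I would observe that every map occurring in the two identities is an algebra homomorphism. Indeed, the comodule structure maps $L_G^{(k)}$ and $R_G^{(k)}$ are algebra homomorphisms because $\mathscr A$ is an $A$-comodule-algebra; each $\sigma\in S_m$ acts on $\mathscr A$ through the automorphism $\Theta(\sigma)$; and $1\otimes\sigma$, $\sigma\otimes 1$ are algebra homomorphisms of $A\otimes\mathscr A$ and $\mathscr A\otimes A$. Hence both sides of each identity are algebra homomorphisms out of $\mathscr A$, and it suffices to verify that they agree on the generators $a_{i_1\ldots i_m}$.

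For the first identity I would rewrite the definition $L_G^{(k)}(a^{(k)}_{i\alpha})=\sum_j a_{ij}\otimes a^{(k)}_{j\alpha}$ on a generator, taking $i=i_k$ and letting $\alpha$ collect the remaining indices, as
\[
L_G^{(k)}(a_{i_1\ldots i_m})=\sum_{j=1}^n a_{i_k,j}\otimes a_{i_1\ldots i_{k-1}\,j\,i_{k+1}\ldots i_m}.
\]
The conceptual content is that $L_G^{(k)}$ coacts on the $k$th slot, so conjugating by $\sigma$ should merely relabel this as the $\sigma(k)$th slot. Applying $\sigma^{-1}$ first yields $a_{i_{\sigma(1)}\ldots i_{\sigma(m)}}$; then $L_G^{(k)}$ acts on the entry now sitting in position $k$, namely $i_{\sigma(k)}$, producing the first factor $a_{i_{\sigma(k)},j}$.

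The key step is to track where the freshly created summation index $j$ lands under $1\otimes\sigma$. Write the second factor of $L_G^{(k)}\sigma^{-1}(a_{i_1\ldots i_m})$ as $a_{c_1\ldots c_m}$ with $c_k=j$ and $c_p=i_{\sigma(p)}$ for $p\neq k$. Using $\sigma\, a_{c_1\ldots c_m}=a_{c_{\sigma^{-1}(1)}\ldots c_{\sigma^{-1}(m)}}$, the entry in position $\sigma(k)$ of $\sigma(a_{c_1\ldots c_m})$ is $c_{\sigma^{-1}(\sigma(k))}=c_k=j$, while for $p\neq\sigma(k)$ the entry in position $p$ is $c_{\sigma^{-1}(p)}=i_p$, since then $\sigma^{-1}(p)\neq k$. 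Thus $j$ is transported precisely to the $\sigma(k)$th slot and every other slot recovers the original $i_p$; as $1\otimes\sigma$ leaves the first tensor factor untouched, collecting gives exactly $L_G^{(\sigma(k))}(a_{i_1\ldots i_m})$.

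Finally, the second identity is entirely parallel: on a generator $R_G^{(k)}(a_{i_1\ldots i_m})=\sum_j a_{i_1\ldots i_{k-1}\,j\,i_{k+1}\ldots i_m}\otimes a_{j,i_k}$, again coacting on the $k$th slot but placing the $\mathscr A$-factor first, and now $\sigma\otimes 1$ permutes that first factor. The same slot-tracking carries $k$ to $\sigma(k)$. I expect the only genuine obstacle to be keeping straight the distinction between $\sigma$ and $\sigma^{-1}$ in the action $\sigma a_{i_1\ldots i_m}=a_{i_{\sigma^{-1}(1)}\ldots i_{\sigma^{-1}(m)}}$, together with the verification that $j$ is sent to the $\sigma(k)$th position rather than some other slot; both points are resolved by the explicit computation above.
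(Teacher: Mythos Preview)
Your proof is correct and follows the same route as the paper, which simply asserts that the result ``follows easily from definition'' without giving details. Your argument is precisely the direct verification on generators that the paper leaves implicit, and your index bookkeeping (in particular the computation showing $j$ lands in position $\sigma(k)$ after applying $\sigma$) is carried out accurately.
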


Due to this symmetry, we will be focused on $k=1$ for $L_G$ and $k=m$ for $R_G$ in the remaining
part.  If $\xi({I_{1}\ldots I_{m}})$ is an $r$-minor hyperdeterminant, it
is easy to verify that
\begin{align}
L_G(\xi({I_{1}\ldots I_{m}}))=\sum_{|J|=r}\xi^{I_1}_{J}\otimes \xi(J,{I_{2},\ldots, I_{m}}),\\
R_G(\xi({I_{1}\ldots I_{m}}))=\sum_{|J|=r} \xi(I_1,\ldots,
I_{m-1},J)\otimes \xi^{J}_{I_{m}}.
\end{align}
In particular, if $I_1=I_2=\ldots=I_{m}=[1,n]$, we have that
$L_G(\Det_q(\mathscr{A}))={\det}_q(A)\otimes \Det_q(\mathscr{A})$, and
$R_G(\Det_q(\mathscr{A}))= \Det_q(\mathscr{A}) \otimes {\det}_q(A)$.

We recall the connection between the quantum group $\GL_q(n)$ and
the quantum universal enveloping algebra $U_q(gl(n))$ or
rather $U_q(sl(n))$ \cite{J, FRT, NYM, JR}. Let $P$ be the free $\mathbb Z$-lattice of rank n with the
canonical basis $\{\varepsilon_{1},\ldots,\varepsilon_{n}\}$, i.e.
$P=\bigoplus_{i=1}^{n}\mathbb Z\varepsilon_{i}$, endowed with  the symmetric
bilinear form
$\langle\varepsilon_{i},\varepsilon_{j}\rangle=\delta_{ij}$. Now we
define $U_{q}(\mathfrak g)$ as the associative algebra with generators $e_{i},f_{i}(1\leq i
\leq n)$ and $q^{\lambda}(\lambda\in\frac{1}{2}P)$ with the
following relations:
\begin{align}
q^{0}&=1, q^{\lambda}q^{\mu}=q^{\lambda+\mu} \quad (\lambda, \mu \in \frac{1}{2}P),\\
q^{\lambda}e_{k}q^{-\lambda}&=q^{\langle\lambda,\varepsilon_{k}-\varepsilon_{k+1}\rangle}e_{k} \quad (\lambda  \in \frac{1}{2}P, 1\leq k \leq n),\\
q^{\lambda}f_{k}q^{-\lambda}&=q^{-\langle\lambda,\varepsilon_{k}-\varepsilon_{k+1}\rangle}f_{k} \quad (\lambda  \in \frac{1}{2}P, 1\leq k \leq n),\\
e_{i}f_{j}-f_{j}e_{i}&=\delta_{ij}\frac{q^{\varepsilon_{i}-\varepsilon_{i+1}}-q^{-\varepsilon_{i}+\varepsilon_{i+1}}}{q-q^{-1}} \quad (1\leq i, j< n),\\
e_{i}^2e_{j}-&(q+q^{-1})e_ie_je_i+e_je_{i}^2=0  \quad (|i-j|=1),\\
f_{i}^2f_{j}-&(q+q^{-1})f_if_jf_i+f_jf_{i}^2=0  \quad (|i-j|=1),\\
e_{i}e_j&=e_je_i, f_if_j=f_jf_i  \quad (|i-j|>1).
\end{align}
This algebra also has a structure of Hopf algebra with the
following coproduct $\Delta$, counit $\varepsilon$, and antipode
S:
\begin{align}
\Delta(q^\lambda)&=q^\lambda\otimes q^\lambda, \varepsilon(q^\lambda)=1, S(q^\lambda)=q^{-\lambda},\\
\Delta(e_k)&=e_k\otimes
q^{-(\varepsilon_{k}-\varepsilon_{k+1})/2}+q^{-(\varepsilon_{k}-\varepsilon_{k+1})/2}\otimes
e_k,\\
\varepsilon(e_k)&=0, S(e_k)=q^{-1}e_k,\\
\Delta(f_k)&=f_k\otimes
q^{-(\varepsilon_{k}-\varepsilon_{k+1})/2}+q^{-(\varepsilon_{k}-\varepsilon_{k+1})/2}\otimes
f_k,\\ \varepsilon(f_k)&=0, S(f_k)=-qf_k.
\end{align}

For any fixed k, there is a unique pairing of Hopf algebras
\begin{equation}
( \ \ ,\ \ ):U_q(\mathfrak g)\times \GL_q(n)\rightarrow \mathbb C
\end{equation}
satisfying the following relations
\begin{align}
q^{\lambda}(a_{ij})&=\delta_{ij}q^{\langle\lambda,\varepsilon_{i}\rangle},\\
e_{k}(a_{ij})&=\delta_{ik}\delta_{j,k+1},f_{k}(a_{ij})=\delta_{i,k+1}\delta_{j,k},\\
q^{\lambda}(det_q(A)^t)&=q^{t\langle\lambda,\varepsilon_1+\cdots+\varepsilon_n\rangle}  \quad (t\in \mathbb Z),\\
e_{k}(det_q(A)^t)&=f_{k}(det_q(A)^t)=0  \quad (t\in \mathbb Z)
\end{align}
We can regard the element of $U_q(\mathfrak g)$ as a linear functional on
$\GL_{q}(n)$. If $V$ is a right $\GL_q(n)$-comodule (resp. left
$\GL_q(n)$-comodule) with the structure map $L_G:V\rightarrow
V\otimes \GL_q(n)$ (resp. $L_G:V\rightarrow  \GL_q(n)\otimes
V$), then $V$ has a left (resp. right) module structure over $U_q(\mathfrak g)$
defined by
\begin{equation}
x.v=(id\otimes x)R_G(v) \ \ (resp.\ v.x=(x\otimes id)L_G(v)),
\end{equation}
for all $x\in U_q(\mathfrak g)$ and $v\in V$.

The algebra $\mathscr{A}$ becomes a bimodule for $U_q(\mathfrak g)$. The left action of its generators on $\mathscr{A}$ is described
 as follows:
\begin{align}
q^{\lambda}.a_{\alpha i}=q^{\langle\lambda,\varepsilon_{i}\rangle}a_{\alpha i},\\
e_{k}.a_{\alpha i}=\delta_{i,k+1}a_{\alpha,i-1},\\
f_{k}.a_{\alpha i}=\delta_{ik}a_{\alpha,i+1}.
\end{align}
Similarly the right module action is given by
\begin{align}
a_{i\alpha}.q^{\lambda}=q^{\langle\lambda,\varepsilon_{i}\rangle}a_{i\alpha},\\
a_{i\alpha}.e_{k}=\delta_{ik}a_{i+1,\alpha},\\
a_{i\alpha}.f_{k}=\delta_{i,k+1}a_{i-1,\alpha}.
\end{align}
If $x\in U_q(\mathfrak g)$, $\varphi,\psi\in \mathscr{A}$, and
$\Delta(x)=\sum x_{(1)}\otimes x_{(2)}$ then one has
$x.(\varphi\psi)=\sum(x_{(1)}.\varphi)( x_{(2)}.\psi)$ and
$(\varphi\psi).x=\sum(\varphi.x_{(1)})(\psi. x_{(2)})$. In
particular,
$e_{k}.\Det_{q}(\mathscr{A})=f_{k}.\Det_{q}(\mathscr{A})=0$, and
$q^{\lambda}.\Det_{q}(\mathscr{A})=q^{\langle\lambda,\varepsilon_1+\cdots+\varepsilon_n\rangle}\Det_{q}(\mathscr{A})$.

\section{Quantum hyper-Pfaffians}
The algebra $\mathscr B$ associated with the quantum hyper-antisymmetric matrices
is the associative algebra generated by
$b_{i_1i_2\ldots i_{mk}}$, $1\leq i_1,i_2,\ldots,i_{mk}\leq kn$.

We define the quantum hyper-Pfaffian as follows.

\begin{equation}\label{quantum hyper-Pfaffian}
\begin{aligned}
&\Pf_q^{[k,m]}(\mathscr{B})\\
=&\frac{1}{[n]_{q^{k^2}}!}
\sum_{\sigma}
(-q)^{\ell(\sigma)}\prod_{i=1}^{n}b_{\sigma_1((i-1)k+1)\ldots\sigma_1((i-1)k+k)\ldots \sigma_m((i-1)k+1)\ldots\sigma_m((i-1)k+k)},
\end{aligned}
\end{equation}
where $\sigma=(\sigma_1,\ldots,\sigma_m)\in S_k^m$ and $\ell(\sigma)=\sum_{i=1}^{m}\ell(\sigma_i)$. The sum can also be viewed
as running over the permutations of $S_{km}$
such that $\sigma_j((i-1)k+1)<\cdots<\sigma_j((i-1)k+k)$ for any $1\leq i\leq n,1\leq j\leq m$.

\begin{remark}\label{remark hpf}
When $m=1$, the quantum hyper-Pfaffian is the same as the hyper-Pfaffian studied in \cite{JZ}, while the latter generalized and deformed Barvinok's hyper-Pfaffian \cite{B}, see also \cite{LT}.

When $k=2$, the quantum hyper-Pfaffian is a quantum  analog of Matsumoto's hyper-Pfaffian \cite{M}.

When $k=1$, $\Pf_q^{[1,m]}(\mathscr{B})=\frac{[n]_{q^2}!}{[n]_q!}\det_q(\mathscr{B})$.
\end{remark}

Let $I=I_1\times I_2\times \cdots I_m$ be a subset of $[1,kn]^m$ satisfying $|I_j|=k$ for $1\leq j\leq m$.
$i^j_1<i^j_2<\cdots <i^j_k$ are elements in $I_j$. Denote
$b_I=b_{I_1,\ldots,I_m}=b_{i^{1}_{1}\ldots i^{1}_{k}\ldots i^{m}_{1}\ldots i^{m}_{k}}$.

Let $K=K_1\times K_2\times \cdots K_m$ be a subset of $[1,kn]^m$ satisfying $|K_j|=2k$ for $1\leq j\leq m$. Suppose that
\begin{equation}\label{hyPf}
\begin{aligned}
&(-q)^{k^2}\sum_{i_1^1<j^1_1}(-q)^{inv(I,J)}b_{I}b_{J}\\
&=\sum_{i_1^1<j^1_1}(-q)^{inv(J,I)}b_{J}b_{I},
\end{aligned}
\end{equation}
where $I=I_1\times I_2\times \cdots\times I_m$ is the subset of $K$ such that $|I_j|=k$, $J=J_1\times \cdots\times J_m$, $J_i$ is the complement of $I_i$ in $K_i$, $inv(I,J)=\sum_{t=1}^{m}inv(I_t,J_t)$, $inv(I_t, J_t)=|\{(i_s^t>j_s^t)\}|$, the sum runs over all subsets $I$ of $K$ such that $i_1^1<j^1_1$.


The hyper-Pfaffian can be alternatively defined as follows. Let
\begin{align}\label{def pf2}
\Pf_q'(\mathscr{B})=
&\sum_{I} (-q)^{\sum_{s,t}i_t^s-\frac{k(k+1)m}{2}}  b_{I}\Pf_q'(\mathscr{B}_{I^c}),
\end{align}
where the sum runs over all the subset $I$ of $[1,kn]^m$ satisfying $|I_j|=k$ for $1\leq j\leq m$, $i^1_1$ is the smallest in the set $[1,kn]$. $I^c=I_1^c\times\cdots\times I_m^c$.

Similar to \cite{JZ}, the following two lemmas are obtained by induction on $n$.
\begin{lemma}\label{def pf3}
The Pfaffian $\Pf_q'(\mathscr{B})$ can be expanded as follows:
\begin{equation}\label{quantum hyper-Pfaffian}
\Pf_q'(\mathscr{B})
=\sum_{\sigma}
(-q)^{\ell(\sigma)}\prod_{i=1}^{n}b_{\sigma_1((i-1)k+1)\ldots\sigma_1((i-1)k+k)\ldots \sigma_m((i-1)k+1)\ldots\sigma_m((i-1)k+k)},\\
\end{equation}
where $\sigma=(\sigma_1,\ldots,\sigma_m)$, $\ell(\sigma)=\sum_{i=1}^{m}\ell(\sigma_i)$, the sum runs over the permutations
satisfying $\sigma_j((i-1)k+1)<\ldots<\sigma_j((i-1)k+k)$ for any $1\leq i\leq n,1\leq j\leq m$ and $\sigma_1(1)<\sigma_1(k+1)<\sigma_1(k(n-1)+1)$.
\end{lemma}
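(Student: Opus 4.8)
The plan is to argue by induction on $n$, taking the recursive formula \eqref{def pf2} as the definition of $\Pf_q'(\mathscr B)$ and showing that expanding it reproduces the explicit permutation sum. For the base case $n=1$ the index set is $[1,k]^m$, so the only admissible subset is $I=[1,k]^m$, the complementary Pfaffian is the empty product $1$, and the sign exponent $\sum_{s,t}i^s_t-\tfrac{k(k+1)m}{2}$ vanishes; on the other side the only $\sigma=(\sigma_1,\ldots,\sigma_m)$ with each block increasing is the identity, so both sides equal $b_{1\ldots k\ldots 1\ldots k}$.

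For the inductive step I would insert the inductive hypothesis for $\mathscr B_{I^c}$ (which involves $n-1$ blocks) into \eqref{def pf2}, writing $\Pf_q'(\mathscr B_{I^c})=\sum_\tau(-q)^{\ell(\tau)}\prod_{i=1}^{n-1}b_{(\tau,i)}$ with $\tau=(\tau_1,\ldots,\tau_m)$ running over the normalized permutations of $I^c$. The heart of the argument is the bijection $(I,\tau)\mapsto\sigma$ in which the sorted block $I_j$ becomes the first block $\sigma_j(1)<\cdots<\sigma_j(k)$ of the $j$-th factor and $\tau_j$ fills the remaining blocks. The constraint $i^1_1=1$ in \eqref{def pf2} forces $1$ to be the leader of the first block of $\sigma_1$; since within each block $\sigma_1$ is increasing, $1$ is automatically the smallest of all block-leaders, so the normalization $\sigma_1(1)<\sigma_1(k+1)<\cdots<\sigma_1(k(n-1)+1)$ is exactly the leading condition inherited from $\tau_1$ together with $\sigma_1(1)=1$. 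This makes the correspondence a bijection onto the index set of the explicit sum. Because $b_I$ is the $i=1$ factor in both formulas and the inductive hypothesis produces the remaining factors in the order $i=2,\ldots,n$, the $b$-monomials coincide term by term, and no rearrangement of the (noncommuting) generators $b$ is required.

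The main obstacle is the sign matching, which I would reduce to the factorwise identity $\ell(\sigma_j)=\ell(I_j)+\ell(\tau_j)$, where $\ell(I_j)=\sum_t i^j_t-\tfrac{k(k+1)}{2}$ as in the $q$-minor convention used earlier. This follows by splitting the inversions of $\sigma_j$ according to whether both positions lie in the first block (none, since $I_j$ is sorted), both lie outside (these reproduce $\ell(\tau_j)$ after the order-preserving relabelling of $I_j^c$), or one lies inside and one outside. The mixed inversions count, for each $i^j_t$, the elements of $I_j^c$ below it, namely $(i^j_t-1)-(t-1)=i^j_t-t$, so they sum to $\ell(I_j)$. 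Summing over $j$ gives $\ell(\sigma)=\ell(\tau)+\sum_{s,t}i^s_t-\tfrac{k(k+1)m}{2}$, which is precisely the exponent of $(-q)$ appearing in \eqref{def pf2}. Hence each term of the expanded recursion equals the corresponding term of the explicit formula, and the bijection completes the induction.
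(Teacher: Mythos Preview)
Your proof is correct and follows precisely the approach the paper indicates: the paper's proof of this lemma is the single line ``This is verified by definition and induction,'' and your argument supplies exactly that induction on $n$, with the natural bijection $(I,\tau)\mapsto\sigma$ and the inversion count $\ell(\sigma_j)=\ell(I_j)+\ell(\tau_j)$. The only point worth noting is that the condition $\sigma_1(1)<\sigma_1(k+1)<\sigma_1(k(n-1)+1)$ in the statement is a typographical omission of the intermediate leaders; you correctly read it as $\sigma_1(1)<\sigma_1(k+1)<\cdots<\sigma_1(k(n-1)+1)$, and your observation that the within-block monotonicity forces $1$ to be a block leader (hence $\sigma_1(1)=1$) is what makes the bijection onto.
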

\begin{proof}
This is verified by definition and induction.
\end{proof}
\begin{lemma}\label{lemma equiv}
One has that
\begin{align}
\sum_{I} (-q)^{\sum_{s,t}i_t^s-\frac{k(k+1)m}{2}}  b_{I}\Pf_q'(\mathscr{B}_{I^c})
=[n]_{q^{k^2}}\Pf_q'(\mathscr{B})
\end{align}
where the sum runs over all subsets $I$ of $[1,kn]^m$ satisfying $|I_j|=k$ for $1\leq j\leq m$, $I^c=I_1^c\times\cdots\times I_m^c$.
\end{lemma}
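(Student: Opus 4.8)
The plan is to argue by induction on $n$, following the strategy of the analogous Pfaffian recursion in \cite{JZ}. The content of the identity is that the recursion \eqref{def pf2} peels off only the block $I$ whose first-coordinate block $I_1$ contains the global minimum $1$, whereas the left-hand side here sums over \emph{every} admissible first block; the assertion is that dropping this constraint multiplies the Pfaffian by the $q$-integer $[n]_{q^{k^2}}$. For the base case $n=1$ the only admissible $I$ is $I_j=\{1,\dots,k\}$, so $\sum_{s,t}i_t^s=\frac{k(k+1)m}{2}$, the prefactor is $(-q)^0=1$, and both sides reduce to $b_I=\Pf_q'(\mathscr B)$ because $[1]_{q^{k^2}}=1$.

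For the inductive step I would write the left-hand side as $\Sigma=\Sigma_1+\Sigma_2$, splitting according to whether $1\in I_1$, and abbreviate the prefactor exponent by $e(I)=\sum_{s,t}i_t^s-\frac{k(k+1)m}{2}$. When $1\in I_1$ we have $i_1^1=1$, which is exactly the constraint in \eqref{def pf2}, so $\Sigma_1=\Pf_q'(\mathscr B)$ verbatim. When $1\notin I_1$ the minimum lies in the complement $I^c$, so I would expand the inner factor $\Pf_q'(\mathscr B_{I^c})$ by its own recursion \eqref{def pf2}, peeling off the sub-block $I'\subset I^c$ with $1\in I'_1$; this produces a double sum over disjoint blocks $(I,I')$ with $1\in I'_1$ carrying a product $b_I b_{I'}$.

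The key move is then to swap the two peeled blocks by the quantum hyper-antisymmetry relation \eqref{hyPf}: summing $b_I b_{I'}$ against the inversion weights over all splittings of the $2k$-block $K=I\sqcup I'$, with $I'$ holding the smaller first entry since $1\in I'_1$, rewrites it, up to the factor $(-q)^{k^2}$, as $b_{I'} b_I$ with the complementary inversion weights. After this swap I would reorganize the double sum as $\sum_{I':\,1\in I'_1}(-q)^{e(I')}b_{I'}\bigl(\sum_{I\subset I'^c}(-q)^{e(I)}b_I\,\Pf_q'(\mathscr B_{(I\cup I')^c})\bigr)$, where the inner bracket now runs over all admissible first blocks $I$ of the rank-$(n-1)$ Pfaffian $\Pf_q'(\mathscr B_{I'^c})$. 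By the induction hypothesis the bracket equals $[n-1]_{q^{k^2}}\Pf_q'(\mathscr B_{I'^c})$, and the outer sum is precisely the recursion \eqref{def pf2} for $\Pf_q'(\mathscr B)$. Hence $\Sigma_2=q^{k^2}[n-1]_{q^{k^2}}\Pf_q'(\mathscr B)=(q^{k^2}+\cdots+q^{(n-1)k^2})\Pf_q'(\mathscr B)$, and adding $\Sigma_1$ gives $\Sigma=[n]_{q^{k^2}}\Pf_q'(\mathscr B)$.

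The main obstacle I anticipate is the exponent bookkeeping in the swap: I must check that the index-sum prefactors $e(\cdot)$ computed relative to the full ground set $[1,kn]$ versus the contracted set $I'^c$, together with the inversion statistics $inv(I,I')$ and $inv(I',I)$ appearing in \eqref{hyPf}, combine so that the net effect of moving $b_{I'}$ past $b_I$ is exactly the clean power $q^{k^2}$, the factor $(-1)^{k^2}$ in $(-q)^{k^2}$ being absorbed by the inversion signs rather than leaving an alternating sum. Verifying that this shift is uniform, independent of the particular pair $(I,I')$, is what makes the inner sum collapse onto the induction hypothesis, and it is the only genuinely computational point.
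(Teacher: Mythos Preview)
Your inductive approach is essentially what the paper intends; its proof is the single line ``This is easily verified by Lemma~\ref{def pf3} using induction,'' and both you and the paper are following the \cite{JZ} template. The paper's pointer to Lemma~\ref{def pf3} (the explicit expansion of $\Pf_q'$) rather than to the recursion \eqref{def pf2} is not a genuinely different route: the explicit formula is obtained from the recursion, and either way one must reorder the leading block using the standing hypothesis \eqref{hyPf} and then invoke the rank-$(n-1)$ statement. Your split $\Sigma=\Sigma_1+\Sigma_2$ according to whether $1\in I_1$, the swap of $b_Ib_{I'}$ via \eqref{hyPf} after grouping by $K=I\sqcup I'$, and the collapse of the inner sum to $[n-1]_{q^{k^2}}\Pf_q'(\mathscr B_{I'^c})$ by induction are exactly the right moves.

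On your flagged obstacle: the identity you need is that for $K=I\sqcup I'$ one has
\[
e(I)+e(I';I^c)=inv(I,I')+e(K),\qquad e(I')+e(I;I'^c)=inv(I',I)+e(K),
\]
which holds because $\sum_t i_t^s-\tfrac{k(k+1)}{2}$ counts the inversions of $I_s$ against its complement in the ambient set, and such inversion counts are additive over a disjoint decomposition of the complement. This is what makes the $K$-dependence factor out uniformly and lets \eqref{hyPf} act on the inner sum; after the swap the net prefactor is $(-q)^{k^2}$, and you should check directly against the paper's conventions for $inv$ in \eqref{hyPf} and for $[n]_{q^{k^2}}$ that the sign $(-1)^{k^2}$ behaves as you assert, rather than take its absorption for granted.
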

\begin{proof}
This is easily verified by Lemma \ref{def pf3} using induction.
\end{proof}

\begin{theorem}\label{Equivalent pf}
If the elements of  $\mathscr B$  satisfy relation \eqref{hyPf} for any subset $K$ of $[1,kn]^m$, $|K_j|=2k$, $1\leq j\leq m$, then the two definitions of  the quantum hyper-Pfaffian are equivalent.
\end{theorem}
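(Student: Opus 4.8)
The plan is to prove the sharper identity that the unnormalized permutation sum in the definition of $\Pf_q^{[k,m]}(\mathscr{B})$ equals $[n]_{q^{k^2}}!$ times the recursively defined $\Pf_q'(\mathscr{B})$ of \eqref{def pf2}; dividing by $[n]_{q^{k^2}}!$ then gives the asserted coincidence of the two definitions. Write $S_n=\sum_{\sigma}(-q)^{\ell(\sigma)}\prod_{i=1}^{n}b_{(\cdots)}$ for that sum, so that $\Pf_q^{[k,m]}(\mathscr{B})=S_n/[n]_{q^{k^2}}!$, and argue by induction on $n$. The base case $n=1$ is immediate, since then the only admissible $\sigma$ is the identity in each direction, whence $S_1=b_{1\ldots k,\ldots,1\ldots k}=\Pf_q'(\mathscr{B})$ and $[1]_{q^{k^2}}!=1$.

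For the inductive step I would extract, in each direction $s$, the block $I_s=\{i^s_1<\cdots<i^s_k\}$ occupying the first $k$ positions of the shuffle $\sigma_s$. A routine inversion count then factors the length as $\ell(\sigma_s)=\sum_{t=1}^{k}(i^s_t-t)+\ell(\sigma_s')$, where $\sigma_s'$ is the shuffle induced on the complement; summing over $s$ yields $\ell(\sigma)=\sum_{s,t}i^s_t-\tfrac{k(k+1)m}{2}+\ell(\sigma')$. This is pure bookkeeping and uses no relation in $\mathscr{B}$. Because the first factor of each product is literally $b_I$ with $I=I_1\times\cdots\times I_m$, it may be split off with no reordering and the remaining factors range over all shuffles of the complement, so that
\[
S_n=\sum_{I}(-q)^{\sum_{s,t}i^s_t-\frac{k(k+1)m}{2}}\,b_I\,S_{n-1}(\mathscr{B}_{I^c}),
\]
where the outer sum runs over all $I$ with $|I_s|=k$ and $S_{n-1}(\mathscr{B}_{I^c})$ denotes the same sum built from the $n-1$ remaining blocks on the complement $I^c$.

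Next I would insert the inductive hypothesis $S_{n-1}(\mathscr{B}_{I^c})=[n-1]_{q^{k^2}}!\,\Pf_q'(\mathscr{B}_{I^c})$ and pull the scalar outside the sum, turning the right-hand side into $[n-1]_{q^{k^2}}!\sum_{I}(-q)^{\sum_{s,t}i^s_t-\frac{k(k+1)m}{2}}b_I\,\Pf_q'(\mathscr{B}_{I^c})$. Now Lemma~\ref{lemma equiv} applies directly and collapses the inner sum to $[n]_{q^{k^2}}\Pf_q'(\mathscr{B})$, giving
\[
S_n=[n-1]_{q^{k^2}}!\,[n]_{q^{k^2}}\,\Pf_q'(\mathscr{B})=[n]_{q^{k^2}}!\,\Pf_q'(\mathscr{B}),
\]
which closes the induction and, after division by $[n]_{q^{k^2}}!$, proves the theorem.

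The entire argument above is combinatorial except for the single appeal to Lemma~\ref{lemma equiv}, and it is precisely there that the hypothesis \eqref{hyPf} enters. In the sum over all $I$ the factor $b_I$ stands in front even when $I_1$ is not the block of least leader in the first direction, whereas $\Pf_q'$ orders its factors by increasing first-direction leader (Lemma~\ref{def pf3}); reconciling the two orders forces one to commute $b_I$ past the blocks preceding it, and \eqref{hyPf} is exactly the summed quadratic exchange of two adjacent $2k$-blocks that realizes one such transposition at the cost of $(-q)^{k^2}$. The hard part is therefore the accounting internal to Lemma~\ref{lemma equiv}: one must verify that transporting the distinguished block from the front to its sorted position $p$ produces exactly $q^{(p-1)k^2}$ once the inversion signs carried by $(-q)^{inv(I,J)}$ and $(-q)^{inv(J,I)}$ have absorbed the stray signs in $((-q)^{k^2})^{p-1}$, so that summing over the $n$ positions assembles the $q^{k^2}$-integer $[n]_{q^{k^2}}=1+q^{k^2}+\cdots+q^{(n-1)k^2}$ and nothing more. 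Granting \eqref{hyPf}, this is the computation carried out in the proof of Lemma~\ref{lemma equiv}, and the theorem follows.
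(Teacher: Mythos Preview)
Your proposal is correct and follows essentially the same route as the paper's proof: both argue by induction on $n$, peel off the first $k$-block $I$ in each direction with the inversion weight $(-q)^{\sum_{s,t}i^s_t-\frac{k(k+1)m}{2}}=(-q)^{inv(I,I^c)}$, apply the inductive hypothesis to identify the remaining factor with $[n-1]_{q^{k^2}}!\,\Pf_q'(\mathscr{B}_{I^c})$, and then invoke Lemma~\ref{lemma equiv} to collapse the sum over $I$ into $[n]_{q^{k^2}}\Pf_q'(\mathscr{B})$. Your closing paragraph making explicit that relation~\eqref{hyPf} enters only through Lemma~\ref{lemma equiv} is a helpful clarification that the paper leaves implicit.
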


\begin{proof}
We prove the statement by induction on $n$.
It follows from definition that
\begin{equation}
\Pf_q^{[k,m]}(\mathscr{B})
=\frac{1}{[n]_{q^{k^2}}!}
\sum_{I,\sigma}
(-q)^{\ell(\sigma)+inv(I,I^c)}b_{I}b_{\sigma},\\
\end{equation}
where $b_{\sigma}=\prod_{i=1}^{n}b_{\sigma_2((i-1)k+1)\ldots\sigma_2((i-1)k+k)\ldots \sigma_m((i-1)k+1)\ldots\sigma_m((i-1)k+k)}$, $\sigma=(\sigma_2,\ldots,\sigma_m)$, $\ell(\sigma)=\sum_{i=2}^{m}\ell(\sigma_i)$, and the sum runs over the permutations
satisfying $\sigma_j((i-1)k+1)<\cdots<\sigma_j((i-1)k+k)$ for any $1\leq i\leq n,1\leq j\leq m$.

By induction one has that \begin{equation}
\sum_{\sigma}
(-q)^{\ell(\sigma)} b_{\sigma}={[n-1]_{q^{k^2}}!}\Pf_q'(\mathscr{B}_{I^c}),
\end{equation}
where $\sigma$ is defined as above.

Then
\begin{equation}
\Pf_q^{[k,m]}(\mathscr{B})
=\frac{1}{[n]_{q^{k^2}}}
\sum_{\sigma}
(-q)^{inv(I,I^c)}b_{I}\Pf_q'(\mathscr{B}_{I^c}),
\end{equation}
thus $\Pf_q^{[k,m]}(\mathscr{B})=\Pf_q'(\mathscr{B})$ by Lemma \ref{lemma equiv}.
\end{proof}

We also have the following Laplace expansion for the hyper-Pfaffian.
\begin{proposition}
For any $0\leq t\leq n$,
\begin{equation}
\Pf^{[k,m]}(\mathscr{B})=\left[
\begin{array}{c}
n\\
t
\end{array}\right]_{q^{k^2}}
\sum_{I} (-q)^{inv(I,I^c)}\Pf^{[k,m]}(\mathscr{B}_{I})\Pf^{[k,m]}(\mathscr{B}_{I^c}),
\end{equation}
where the sum runs over all the subset $I$ of $[1,kn]^m$ satisfying $|I_j|=tk$ for $1\leq j\leq m$, $I^c=I_1^c\times\cdots\times I_m^c$.

\end{proposition}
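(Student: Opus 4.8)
The plan is to argue by induction on the block count $t$, with the single-block expansion of Lemma \ref{lemma equiv} as the base and the Gaussian binomial emerging from the $q$-Pascal identity
\[
[t+1]_{q^{k^2}}\left[\begin{array}{c}n\\ t+1\end{array}\right]_{q^{k^2}}=[n-t]_{q^{k^2}}\left[\begin{array}{c}n\\ t\end{array}\right]_{q^{k^2}}.
\]
Write $Q=q^{k^2}$ and, for each $t$, set $L_t=\sum_{I}(-q)^{inv(I,I^c)}\Pf^{[k,m]}(\mathscr B_I)\Pf^{[k,m]}(\mathscr B_{I^c})$, the sum running over all $I$ with $|I_j|=tk$; establishing the proposition amounts to the identity $L_t=\left[\begin{array}{c}n\\ t\end{array}\right]_{Q}\Pf^{[k,m]}(\mathscr B)$, where, as in Theorem \ref{Equivalent pf}, the entries of $\mathscr B$ are assumed to satisfy \eqref{hyPf}. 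When $t=1$ each $I$ is a single $k$-block, so $\Pf^{[k,m]}(\mathscr B_I)=b_I$ and $[1]_Q!=1$; since $\sum_{s,t}i_t^s-\tfrac{k(k+1)m}{2}=inv(I,I^c)$, the sum $L_1$ is exactly the left side of Lemma \ref{lemma equiv}, which gives $L_1=[n]_Q\,\Pf^{[k,m]}(\mathscr B)=\left[\begin{array}{c}n\\ 1\end{array}\right]_{Q}\Pf^{[k,m]}(\mathscr B)$.

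For the inductive step I would peel one further block off the complementary factor. Applying Lemma \ref{lemma equiv} to $\mathscr B_{I^c}$, a hyper-antisymmetric array on $(n-t)k$ indices per slot, and substituting into $[n-t]_Q L_t$, I regroup the resulting double sum according to $I'=I\sqcup J$ with $|I'_j|=(t+1)k$. The signs collapse cleanly: writing $I^c=J\sqcup {I'}^{c}$ and using additivity of $inv$ in each of its two arguments gives $inv(I,I^c)+inv(J,I^c\setminus J)=inv(I,J)+inv(I',{I'}^{c})$, whence
\[
[n-t]_Q L_t=\sum_{I'}(-q)^{inv(I',{I'}^{c})}\Big(\sum_{I\sqcup J=I'}(-q)^{inv(I,J)}\Pf^{[k,m]}(\mathscr B_{I})\,b_J\Big)\Pf^{[k,m]}(\mathscr B_{{I'}^{c}}).
\]
The inner sum is a right-handed single-block expansion of $\Pf^{[k,m]}(\mathscr B_{I'})$, with $b_J$ now standing to the right; once it is identified with $[t+1]_Q\Pf^{[k,m]}(\mathscr B_{I'})$ one obtains $[n-t]_Q L_t=[t+1]_Q L_{t+1}$, and the $q$-Pascal identity promotes the inductive hypothesis to $L_{t+1}=\left[\begin{array}{c}n\\ t+1\end{array}\right]_{Q}\Pf^{[k,m]}(\mathscr B)$.

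The main obstacle is exactly this identification of the inner right-handed peel with $[t+1]_Q\Pf^{[k,m]}(\mathscr B_{I'})$. Lemma \ref{lemma equiv} only furnishes the left-handed expansion, with the peeled block on the left, so one must transport $b_J$ across $\Pf^{[k,m]}(\mathscr B_I)$; this is precisely what relation \eqref{hyPf} licenses, exchanging the ordering $b_I b_J$ for $b_J b_I$ at the cost of $(-q)^{k^2}$ per transposed $k$-block. The occurrence of $q^{k^2}$ here is what forces the quantum integer $[t+1]_{q^{k^2}}$ instead of $[t+1]_q$, and hence the Gaussian binomial in base $q^{k^2}$. Checking that these block transpositions resum to the single factor $[t+1]_{q^{k^2}}$ with no residual powers of $-q$ is the one genuinely computational point; the rest is the formal $q$-binomial recursion displayed above.
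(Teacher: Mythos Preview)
Your route is genuinely different from the paper's, and the ``one genuinely computational point'' you flag is a real gap rather than a routine check.

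The paper does not induct at all. It introduces the $k$-form $\Omega=\sum_{I}b_Ix_I$ in $\mathscr B\otimes\Lambda^{\otimes m}$ and uses $\Omega^n=[n]_{q^{k^2}}!\,\Pf^{[k,m]}(\mathscr B)\,x_{[1,kn]^m}$ together with $\Omega^t=[t]_{q^{k^2}}!\sum_{I}\Pf^{[k,m]}(\mathscr B_I)x_I$ and the analogous formula for $\Omega^{n-t}$. Writing $\Omega^n=\Omega^t\wedge\Omega^{n-t}$ and reading off the coefficient of $x_{[1,kn]^m}$ gives $[n]_{q^{k^2}}!\,\Pf=[t]_{q^{k^2}}!\,[n-t]_{q^{k^2}}!\,L_t$ in one stroke. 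All the sign and $q$-power bookkeeping is absorbed by the exterior algebra, so neither an induction nor a right-handed expansion is needed.

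Your induction is set up correctly through the regrouping step, and the sign identity $inv(I,I^c)+inv(J,I^c\setminus J)=inv(I,J)+inv(I',{I'}^c)$ is valid. The problem is the identification of the inner sum $\sum_{I\sqcup J=I'}(-q)^{inv(I,J)}\Pf^{[k,m]}(\mathscr B_I)\,b_J$ with $[t+1]_{q^{k^2}}\Pf^{[k,m]}(\mathscr B_{I'})$. Relation \eqref{hyPf} is \emph{not} a commutation of the shape $b_Ib_J=(-q)^{\pm k^2}b_Jb_I$: it equates two \emph{sums} over all splittings of a fixed $2k$-block $K$, so it does not license transporting a particular $b_J$ past the $t$ individual factors of a monomial in $\Pf^{[k,m]}(\mathscr B_I)$ at a scalar cost. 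What \eqref{hyPf} really encodes is that $\Omega$ behaves like a $q^{k^2}$-commuting variable under wedging; the clean way to obtain your right-handed version of Lemma \ref{lemma equiv} is therefore again via $\Omega$ (compute $\Omega^{t+1}=\Omega^{t}\wedge\Omega$ on the index set $I'$), at which point you have essentially reproduced the paper's argument. As written, the phrase ``at the cost of $(-q)^{k^2}$ per transposed $k$-block'' overstates what \eqref{hyPf} gives, and the key step remains unjustified.
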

\begin{proof}
Let $\Omega=\sum_{I}b_Ix_I$, where
$I=I_1\times I_2\times \cdots\times I_m$ runs through all the subsets of $[1,kn]^m$ such that $|I_j|=k$, $x_I=x_{I_1}\otimes\cdots \otimes x_{I_m}$. Then
\begin{equation}\label{e:omega}
\Omega^n=[n]_{q^{k^2}}!\Pf^{[k,m]}(\mathscr{B})x_{[1,2n]^m}.
\end{equation}

One the other hand, $\Omega^n=\Omega^t\wedge\Omega^{n-t}$, and
\begin{align}
\Omega^t&=[t]_{q^{k^2}}!\sum_{I}\Pf^{[k,m]}(\mathscr{B}_I)x_I,\\
\Omega^{n-t}&=[n-t]_{q^{k^2}}!\sum_{J}\Pf^{[k,m]}(\mathscr{B}_J)x_J
\end{align}
where $I=I_1\times I_2\times \cdots I_m$, $|I_{s}|=tk$ for $1\leq s \leq m$,
and $J=J_1\times J_2\times \cdots J_m$,
$|J_{s}|=tk$ for $1\leq s \leq m$. Then
\begin{equation}
\Omega^n=[t]_{q^{k^2}}![n-t]_{q^{k^2}}!\sum_{I}\Pf^{[k,m]}(\mathscr{B}_I)\sum_{J}\Pf^{[k,m]}(\mathscr{B}_J)x_Ix_J.
\end{equation}

Note that $x_Ix_J=0$ except $J=I^c$, and $x_Ix_{I^c}=(-q)^{\ell(I,I^c)}x_{[1,kn]^m}$.
Comparing with (\ref{e:omega}), we conclude that
\begin{equation*}
\Pf(\mathscr{B})=\left[
\begin{array}{c}
n\\
t
\end{array}\right]_{q^{k^2}}
\sum_{I} \Pf^{[k,m]}(\mathscr{B}_{I})\Pf^{[k,m]}(\mathscr{B}_{I^c}).
\end{equation*}
\end{proof}

\begin{theorem}\label{Composition pf}
Suppose $k=pk'$, Then
\begin{equation}\label{e:iden-pf}
\Pf^{[k,m]}(\Pf^{[k',m]}(\mathscr{B}_J))=\frac{[pn]_{q^{k'^2}}!}{([p]_{q^{k'^2}}!)^n[n]_{q^{k^2}}!}\Pf^{[k',m]}(\mathscr{B}),
\end{equation}
where
$J=J_1\times \cdots\times J_m$,
$|J_{s}|=k$ for $1\leq s \leq m$.
\end{theorem}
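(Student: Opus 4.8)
The plan is to prove \eqref{e:iden-pf} by the quantum volume-element method used throughout this section, evaluating a single top-degree form in two ways. Working inside the quantum exterior algebra on $[1,kn]=[1,k'(pn)]$, set $\Omega=\sum_{I'}b_{I'}x_{I'}$, where $I'=I_1'\times\cdots\times I_m'$ ranges over all multi-subsets with $|I_s'|=k'$ and $x_{I'}=x_{I_1'}\otimes\cdots\otimes x_{I_m'}$. Since $\mathscr B$ is hyper-antisymmetric, the fundamental volume formula \eqref{e:omega}, applied with block size $k'$ and $pn$ blocks, gives $\Omega^{pn}=[pn]_{q^{k'^2}}!\,\Pf^{[k',m]}(\mathscr B)\,x_{[1,kn]^m}$. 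This is the first evaluation, and it is unconditional.

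For the second evaluation I would regroup the $pn$ wedge factors as $p\cdot n$. By the intermediate-power (Laplace) formula established in the proof of the preceding Proposition, taken with block size $k'$ and $t=p$, one has $\Omega^{p}=[p]_{q^{k'^2}}!\sum_{J}\Pf^{[k',m]}(\mathscr B_J)\,x_J$, the sum running over all $J=J_1\times\cdots\times J_m$ with $|J_s|=pk'=k$. Writing $c_J=\Pf^{[k',m]}(\mathscr B_J)$, the right-hand side is exactly $[p]_{q^{k'^2}}!$ times the generating element $\Omega_{\mathscr C}=\sum_J c_J x_J$ of the blocked hypermatrix $\mathscr C=(c_J)$ whose outer Pfaffian is $\Pf^{[k,m]}(\mathscr C)=\Pf^{[k,m]}(\Pf^{[k',m]}(\mathscr B_J))$. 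Because the componentwise wedge product is associative and $\Omega^p$ is of degree $k$ in each $\Lambda$-factor, we get $\Omega^{pn}=(\Omega^p)^n=([p]_{q^{k'^2}}!)^n\,\Omega_{\mathscr C}^{\,n}$.

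It then remains to evaluate $\Omega_{\mathscr C}^{\,n}$ by the fundamental formula once more, now with block size $k$ and $n$ blocks, giving $\Omega_{\mathscr C}^{\,n}=[n]_{q^{k^2}}!\,\Pf^{[k,m]}(\mathscr C)\,x_{[1,kn]^m}$. Comparing the coefficients of $x_{[1,kn]^m}$ in the two evaluations yields $[pn]_{q^{k'^2}}!\,\Pf^{[k',m]}(\mathscr B)=([p]_{q^{k'^2}}!)^n[n]_{q^{k^2}}!\,\Pf^{[k,m]}(\mathscr C)$, which is precisely \eqref{e:iden-pf} after dividing through. The base $q^{k^2}$ enters correctly here because the entries $c_J$ are indexed by $k$-element blocks, so two disjoint factors $x_J,x_{J'}$ obey the $q^{k^2}$-commutation governing $[n]_{q^{k^2}}$, whereas the base $q^{k'^2}$ appears whenever one wedges the $k'$-blocks directly.

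The one genuinely nontrivial point — and the main obstacle — is the last application of the fundamental formula: it presupposes that $\mathscr C$ is again a quantum hyper-antisymmetric matrix, i.e. that the blocked Pfaffians $c_J$ satisfy relation \eqref{hyPf} with block size $k$, so that its volume-element value agrees with the combinatorial hyper-Pfaffian of the statement via Theorem \ref{Equivalent pf}. I would secure this Pfaffian--Pl\"ucker relation by a localized version of the same wedge computation: for each fixed multi-subset $K$ with $|K_s|=2k$, restrict $\Omega$ to the $k'$-blocks contained in $K$, call it $\Omega_K$, and expand $\Omega_K^{2p}=((\Omega_K)^p)^2$. On the one hand this equals $[2p]_{q^{k'^2}}!\,\Pf^{[k',m]}(\mathscr B_K)\,x_K$ by the fundamental formula for the hyper-antisymmetric sub-matrix $\mathscr B_K$; on the other hand, substituting $(\Omega_K)^p=[p]_{q^{k'^2}}!\sum_{I\sqcup I^c=K}c_I x_I$ forces all cross terms except complementary pairs to vanish, and matching the two expressions produces exactly the relation constraining the $c_I$, in the same spirit as the Pl\"ucker relations of Propositions \ref{thp1} and \ref{thp3}. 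This identifies $\mathscr C$ as hyper-antisymmetric; the rest of the argument is bookkeeping of $q$-powers and associativity of the wedge.
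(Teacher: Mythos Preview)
Your argument is correct and follows exactly the paper's route: evaluate $\Omega_{k'}^{pn}$ directly, then as $(\Omega_{k'}^{p})^{n}$, and compare coefficients of $x_{[1,kn]^m}$.

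The only comment concerns your last paragraph. The identity $\Omega_{\mathscr C}^{\,n}=[n]_{q^{k^2}}!\,\Pf^{[k,m]}(\mathscr C)\,x_{[1,kn]^m}$ does \emph{not} require that the $c_J$ satisfy relation \eqref{hyPf}, and Theorem \ref{Equivalent pf} is never invoked. The volume identity \eqref{e:omega} is nothing more than the direct expansion of $\Omega^n$ using the quantum exterior relations on the $x$'s: the surviving terms are exactly those indexed by ordered componentwise partitions of $[1,kn]^m$ into $k$-blocks, each contributing a factor $(-q)^{\ell(\sigma)}$, and this sum is by definition $[n]_{q^{k^2}}!\,\Pf^{[k,m]}$ from \eqref{quantum hyper-Pfaffian}. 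No hypothesis on the coefficients is used. So your proposed verification that $\mathscr C$ is hyper-antisymmetric, while not wrong, is superfluous for the theorem as stated; the paper's proof accordingly omits it.
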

\begin{proof}
Let ${\Omega_{k'}}=\sum_{I}b_Ix_I$, where
$I=I_1\times \cdots\times I_m$ runs through subsets of $[1,kn]^m$ satisfying $|I_j|=k'$, and
$x_I=x_{I_1}\otimes\cdots \otimes x_{I_m}$. Then
\begin{equation}\label{e:omega}
{\Omega_{k'}}^{pn}=[pn]_{q^{k'^2}}!\Pf^{[k',m]}(\mathscr{B})x_{[1,2n]^m}.
\end{equation}

Clearly ${\Omega_{k'}}^{pn}=({\Omega_{k'}}^{p})^n$, and
\begin{equation}\label{e:omk}
{\Omega_{k'}}^{p}=[p]_{q^{k'^2}}!\sum_{J}\Pf^{[k',m]}(\mathscr{B}_J)x_{J},
\end{equation}
where $J=J_1\times\cdots\times J_m$,
$|J_{s}|=k$ for $1\leq s \leq m$.
Then
\begin{equation}\label{e:omkp}
({\Omega_{k'}}^{p})^n=([p]_{q^{k'^2}}!)^n[n]_{q^{k^2}}!\Pf^{[k,m]}(\Pf^{[k',m]}(\mathscr{B}_J)).
\end{equation}
Comparing \eqref{e:omega} and \eqref{e:omkp}, we see that \eqref{e:iden-pf} holds.
\end{proof}

\section{Relationship between Pfaffians and determinants}
\begin{theorem}\label{Pf eq}
Let $\mathscr{B}=(b_{i_1\ldots i_k})_{1\leq i_1,\ldots,i_k \leq kn}$ be a hyper-matrix, and suppose its entries $b_{i_1\ldots i_{m}}$ commute with any element in the algebra $\mathscr {A}$.
For any $I=I_1\times\cdots\times I_{m-1}$, $|I_{s}|=k$, $1\leq s\leq m-1$, let $\mathscr{C}=(c_I)$ be the hypermatrix with
\begin{equation}
c_{I}=\sum_{J}b_J \xi(J,I_1,\ldots,I_{m-1}),
\end{equation}
where $J$ runs over the subsets of $[1,2n]$ such that $|J|=k$, then one has that
\begin{equation}\label{e:iden-pf-det}
\Pf_{q}^{[k,m-1]}(\mathscr{C})={\Det}_q^{[m]}(\mathscr{A})\Pf_{q}^{[k,1]}(\mathscr{B}).
\end{equation}
\end{theorem}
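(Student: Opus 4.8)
The plan is to prove the identity by the quantum volume-element method used throughout the paper, realizing both sides as coefficients of a single top-degree $q$-form obtained by transporting the Pfaffian form of $\mathscr B$ through the homomorphism $f^{(1)}$ attached to $\mathscr A$. Since $\mathscr B$ is the $m=1$ instance, I would first set up its form $\Omega=\sum_J b_J x_J$, where $J$ ranges over the $k$-element subsets of the ambient index set $[1,kn]$ and $x_J=x_{j_1}\wedge\cdots\wedge x_{j_k}$ lies in a single copy of $\Lambda$. By the Pfaffian volume formula (the Laplace-expansion proposition specialized to $m=1$), one has $\Omega^n=[n]_{q^{k^2}}!\,\Pf_q^{[k,1]}(\mathscr B)\,x_{[1,kn]}$.

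Next I would transport $\Omega$ through the algebra homomorphism $f^{(1)}\colon\Lambda\to\mathscr A\otimes\Lambda^{\otimes(m-1)}$, $x_j\mapsto\omega_j:=x_j^{(1)}=\sum_\alpha a^{(1)}_{j\alpha}x_\alpha$. Because by hypothesis the $b_J$ commute with every element of $\mathscr A$ and with the $x_i$, the map $\mathrm{id}_{\mathscr B}\otimes f^{(1)}$ is multiplicative for the wedge product and sends $\Omega$ to $\widetilde\Omega:=\sum_J b_J(\omega_{j_1}\wedge\cdots\wedge\omega_{j_k})$. The central computation, identical to the wedge expansion that proves the Laplace formula for hyper-minors, is $\omega_{j_1}\wedge\cdots\wedge\omega_{j_k}=\sum_{I_1,\ldots,I_{m-1}}\xi(J,I_1,\ldots,I_{m-1})\,x_{I_1}\otimes\cdots\otimes x_{I_{m-1}}$, so that $\widetilde\Omega=\sum_I c_I x_I$ with $c_I=\sum_J b_J\,\xi(J,I_1,\ldots,I_{m-1})$. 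Thus $\widetilde\Omega$ is exactly the defining $q$-form of $\Pf_q^{[k,m-1]}(\mathscr C)$, giving $\widetilde\Omega^n=[n]_{q^{k^2}}!\,\Pf_q^{[k,m-1]}(\mathscr C)\,(x_{[1,kn]})^{\otimes(m-1)}$.

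Finally I would evaluate $\widetilde\Omega^n$ the other way, using that $f^{(1)}$ is a homomorphism: $\widetilde\Omega^n=(\mathrm{id}\otimes f^{(1)})(\Omega^n)=[n]_{q^{k^2}}!\,\Pf_q^{[k,1]}(\mathscr B)\,f^{(1)}(x_{[1,kn]})$, while $f^{(1)}(x_{[1,kn]})=\omega_1\wedge\cdots\wedge\omega_{kn}=\Det_q^{[m]}(\mathscr A)\,(x_{[1,kn]})^{\otimes(m-1)}$ by the determinant volume identity of the Laplace-expansion section (with ambient size $kn$). Comparing the two expressions for $\widetilde\Omega^n$ and cancelling the common nonzero factor $[n]_{q^{k^2}}!\,(x_{[1,kn]})^{\otimes(m-1)}$ yields $\Pf_q^{[k,m-1]}(\mathscr C)=\Det_q^{[m]}(\mathscr A)\,\Pf_q^{[k,1]}(\mathscr B)$, as desired.

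The main obstacle is the middle step: verifying that the wedge expansion of $\omega_{j_1}\wedge\cdots\wedge\omega_{j_k}$ reproduces precisely the minors $\xi(J,I_1,\ldots,I_{m-1})$ with the correct $(-q)^{inv}$ signs and the lexicographic ordering conventions, so that $\widetilde\Omega$ agrees with the Pfaffian-defining form of $\mathscr C$ on the nose. One must also confirm that the commutativity hypothesis on the entries of $\mathscr B$ is genuinely what makes $\mathrm{id}\otimes f^{(1)}$ multiplicative across the $\mathscr B$- and $\mathscr A$-factors, since otherwise the passage $\widetilde\Omega^n=(\mathrm{id}\otimes f^{(1)})(\Omega^n)$ would fail.
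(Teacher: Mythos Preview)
Your proposal is correct and follows essentially the same volume-element argument as the paper's own proof: both identify the single form $\sum_J b_J\,\omega_J=\sum_I c_I\,x_I$ and compute its $n$-th wedge power two ways, once via the Pfaffian formula for $\mathscr C$ and once via the Pfaffian of $\mathscr B$ followed by the hyperdeterminant volume identity $\omega_1\wedge\cdots\wedge\omega_{kn}=\Det_q^{[m]}(\mathscr A)\,(x_{[1,kn]})^{\otimes(m-1)}$. Your framing via the homomorphism $f^{(1)}$ is slightly more explicit than the paper's direct rewriting, but the content is identical; note that the order $\Pf_q^{[k,1]}(\mathscr B)\,\Det_q^{[m]}(\mathscr A)$ you obtain matches the stated $\Det_q^{[m]}(\mathscr A)\,\Pf_q^{[k,1]}(\mathscr B)$ precisely because of the commutativity hypothesis on the entries of $\mathscr B$.
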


\begin{proof}
Let $\Omega=\sum c_{I}x_{I}$.
Then
\begin{equation}
\Omega^n=[n]_{q^{k^2}}!\Pf^{[k,m-1]}(\mathscr{C})x_{[1,2n]^{m-1}}.
\end{equation}

Since $\omega_J=\sum_{I}\xi(J,I_1,\ldots,I_{m-1})x_I$, one can write that
$$\Omega=\sum_{I}c_{I}x_I=\sum_{I,J}b_J \xi(J,I_1,\ldots,I_{m-1})x_I=\sum_{J}b_J\omega_J.$$
Then $\Omega^n=[n]_{q^{k^2}}!\Pf^{[k,1]}(\mathscr{B})\omega_{[1,kn]}=[n]_{q^{k^2}}!\Pf^{[k,1]}(\mathscr{B}){\Det}_q^{[m]}(\mathscr{A})x_{[1,2n]^{m-1}}.$
From which \eqref{e:iden-pf-det} is obtained.
\end{proof}

\begin{corollary}
Let $b=(b_{ij})$, $1\leq i, j\leq 2n$ be any matrix, $b_{2i-1,2i}=1$, all other entries are $0$.
For any $I=I_1\times I_2\times \cdots I_{m-1}$, $|I_{s}|=2$, $1\leq s\leq m-1$, let $c_{I}=\sum_{J}b_J \xi(J,I_1,\ldots,I_{m-1})$, the sum runs over the subset $J$ of $[1,2n]$ satisfying $|J|=2$ , then ${\Det}_q^{[m]}(\mathscr{A})=\Pf_{q}^{[2,m-1]}(\mathscr{C})$. Moreover, $\Pf_{q}^{[2,m-1]}(\mathscr{C})$
can be simplified as \eqref{def pf2} and  \eqref{def pf3}.
\end{corollary}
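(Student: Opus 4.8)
The plan is to obtain the corollary as the specialization $k=2$ of Theorem \ref{Pf eq}, the only extra input being the value of the quantum Pfaffian of the constant matrix $b$. Since the entries of $b$ are scalars, they commute with every element of $\mathscr{A}$, so the hypothesis of Theorem \ref{Pf eq} holds automatically and it remains only to identify $\Pf_q^{[2,1]}(\mathscr{B})$.

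First I would evaluate $\Pf_q^{[2,1]}(\mathscr{B})$ for the given $b$. Expanding by the definition \eqref{quantum hyper-Pfaffian} with $k=2$, $m=1$, a summand $\prod_{i=1}^{n} b_{\sigma(2i-1),\sigma(2i)}$ is nonzero only when each pair $(\sigma(2i-1),\sigma(2i))$ equals $(2a-1,2a)$ for some $a$, because $b_{2a-1,2a}=1$ and all other entries vanish. The admissible $\sigma$ are therefore the block permutations $\sigma_\tau$ indexed by $\tau\in S_n$, which send the positions $\{2i-1,2i\}$ to $\{2\tau(i)-1,2\tau(i)\}$; a short count shows $\ell(\sigma_\tau)=4\ell(\tau)$. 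Hence $\Pf_q^{[2,1]}(\mathscr{B})=\frac{1}{[n]_{q^{4}}!}\sum_{\tau\in S_n}(q^{4})^{\ell(\tau)}=1$. Substituting this together with $k=2$ into \eqref{e:iden-pf-det} gives $\Pf_q^{[2,m-1]}(\mathscr{C})=\Det_q^{[m]}(\mathscr{A})$, which is the first assertion. (The identity $\ell(\sigma_\tau)=k^2\ell(\tau)$ is exactly what makes the normalization $[n]_{q^{k^2}}!$ collapse the sum to $1$.)

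For the remaining claim I would check that $\mathscr{C}$ satisfies relation \eqref{hyPf}; then Theorem \ref{Equivalent pf} identifies $\Pf_q^{[2,m-1]}(\mathscr{C})$ with the recursive Pfaffian \eqref{def pf2}, and Lemma \ref{def pf3} provides its explicit expansion. Here each entry $c_I=\sum_J b_J\,\xi(J,I_1,\ldots,I_{m-1})$ is a scalar linear combination of the $2$-minor hyperdeterminants $\xi$ of $\mathscr{A}$, so the quadratic relation \eqref{hyPf} for the $c_I$ expands, using that the $b_J$ are central scalars, into a linear combination of the quadratic Plücker-type relations among the $\xi$'s. These are precisely the relations established in Propositions \ref{thp1} and \ref{thp3}. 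I expect the matching of combinatorial weights — reconciling the exponents $inv(I,J)$ and the shuffle lengths $\ell(\sigma_t)$ produced on the $\mathscr{C}$ side with those appearing in Propositions \ref{thp1} and \ref{thp3} — to be the main obstacle, the rest being a direct substitution.
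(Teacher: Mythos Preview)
Your approach is essentially the same as the paper's. The paper's proof is a two-line argument: it asserts $\Pf_q^{[2,1]}(B)=1$ and invokes Theorem~\ref{Pf eq} for the first claim, then states that ``by straightforward computation'' the entries of $\mathscr{C}$ satisfy the hypothesis of Theorem~\ref{Equivalent pf}. Your proposal follows exactly this route, but fills in what the paper leaves implicit: you supply the block-permutation count $\ell(\sigma_\tau)=4\ell(\tau)$ that makes $\Pf_q^{[2,1]}(\mathscr{B})=1$, and you propose a concrete mechanism---reduction to the Pl\"ucker-type identities of Propositions~\ref{thp1} and~\ref{thp3}---for the verification of relation~\eqref{hyPf} that the paper simply declares to be ``straightforward.'' The paper does not invoke those propositions explicitly here, so that last reduction is your own elaboration; it is a plausible route, and your caveat about matching the $inv(I,J)$ weights against the shuffle lengths $\ell(\sigma_t)$ correctly identifies where the bookkeeping lies.
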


\begin{proof}
Since $\Pf_{q}^{[2,1]}(B)=1$, the first statement follows from Theorem \ref{Pf eq}.
By straightforward computation the entries of $\mathscr{C}$ satisfy the relations in Theorem \ref{Equivalent pf}, then $\Pf_{q}^{[2,m-1]}(\mathscr{C})$ can be simplified.
\end{proof}

\begin{proposition}
Let $\mathscr {A}$ be the algebra generated by $a_{i_1\ldots i_{m}}$, $1\leq i_k\leq kn$. For any $J=J_1\times J_2\times \cdots J_{m}$, $|J_{s}|=k$, $b_{J}=\xi_{J}=\xi(J_1,\ldots,J_m)$. Then
\begin{equation}\label{e:iden-pf-det2}
{\Det}_q^{[m]}(\mathscr{A})
=\frac{([n]_{q^2}!)^{n-1}([k]_{q}!)^n[n]_{q^{k^2}}!}{([n]_q!)^{n-1}[kn]_{q}!}\Pf^{[k,m]}(b_J).
\end{equation}
\end{proposition}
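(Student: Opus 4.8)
The plan is to realize both sides of the identity as coefficients of the same top-degree volume element obtained by wedging together the auxiliary one-forms $\omega_J$, exactly as in the proofs of Theorem \ref{Pf eq} and the Laplace-expansion proposition above. The guiding idea is that when we take $b_J=\xi_J=\xi(J_1,\ldots,J_m)$, the hypermatrix $\mathscr B$ is the Plücker data of $\mathscr A$ itself, so $\Pf^{[k,m]}(b_J)$ and $\Det_q^{[m]}(\mathscr A)$ must be proportional, and the whole content of the proposition is to pin down the scalar.

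First I would introduce the generating element
\begin{equation}
\Omega=\sum_{J}\xi_J\,x_J,
\end{equation}
where $J=J_1\times\cdots\times J_m$ ranges over subsets of $[1,kn]^m$ with $|J_s|=k$ and $x_J=x_{J_1}\otimes\cdots\otimes x_{J_m}$. Wedging $\Omega$ with itself $n$ times and invoking the definition of the hyper-Pfaffian (as in Eq. \eqref{e:omega}) gives
\begin{equation}
\Omega^n=[n]_{q^{k^2}}!\,\Pf^{[k,m]}(\xi_J)\,x_{[1,kn]^m}.
\end{equation}
On the other side, each $\xi_J$ is itself a $k$-minor hyperdeterminant of $\mathscr A$, so I would substitute the Laplace/exterior description of the $\xi_J$ in terms of the $\omega_i=x_i^{(1)}$ forms. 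The key mechanism is that $\Omega$ collapses, after reindexing, to a constant multiple of $\omega_1\wedge\cdots\wedge\omega_{kn}$, and the latter $n$-fold wedge produces $\Det_q^{[m]}(\mathscr A)$ times $x_{[1,kn]^m}$. Comparing the two evaluations of $\Omega^n$ then yields the proportionality, with the scalar expressed through ratios of quantum factorials.

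The bookkeeping of the scalar is where the real work lies, and it is the step I expect to be the main obstacle. The factor $([k]_q!)^n$ arises because each block of $k$ indices in $\Omega$ carries an internal $q$-symmetrization whose $\wedge^k$ contributes $[k]_q!$, and there are $n$ such blocks; the factor $[kn]_q!/[n]_q!^{\,n-1}$ and the compensating $[n]_{q^2}!$-powers come from matching the normalizing constant $1/[n]_{q^2}!$ built into $\Det_q^{[m]}$ against the $1/[n]_{q^{k^2}}!$ built into the Pfaffian, together with the overlap inversions $(-q)^{\ell(J,J^c)}$ generated when the $x_J$'s are merged into $x_{[1,kn]^m}$. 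I would track these powers by isolating a single representative shuffle and counting inversions block-by-block, using $[kn]_q!=[k]_q!^{\,n}\,[n]_{q^k}!\binom{kn}{k,\ldots,k}_q$-type factorizations of the $q$-multinomial to convert between the $q$-factorials attached to the two normalizations. Once every inversion weight and every $q$-factorial is accounted for, the stated constant
\[
\frac{([n]_{q^2}!)^{n-1}([k]_q!)^n[n]_{q^{k^2}}!}{([n]_q!)^{n-1}[kn]_q!}
\]
emerges, completing the proof.
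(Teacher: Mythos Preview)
Your approach is correct and rests on the same $\Omega$-calculus as the paper, but the paper takes a shorter route. Rather than setting up $\Omega=\sum_J\xi_J x_J$ and computing its $n$-th power from scratch, the paper simply invokes Theorem~\ref{Composition pf} (the composition formula for hyper-Pfaffians) with $k'=1$, which gives
\[
\Pf^{[k,m]}\bigl(\Pf^{[1,m]}(\mathscr A_J)\bigr)=\frac{[kn]_q!}{([k]_q!)^n[n]_{q^{k^2}}!}\,\Pf^{[1,m]}(\mathscr A),
\]
and then uses Remark~\ref{remark hpf}, namely $\Pf^{[1,m]}(\cdot)=\dfrac{[n]_{q^2}!}{[n]_q!}\det_q(\cdot)$, to convert both sides into the stated identity. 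All of the constant bookkeeping that you anticipate as ``the main obstacle''---the $([k]_q!)^n$, the $[kn]_q!$, the powers of $[n]_{q^2}!/[n]_q!$---is already packaged inside the composition theorem, so the paper never has to count block inversions by hand.

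Your direct argument is essentially the $k'=1$ case of the proof of Theorem~\ref{Composition pf} unfolded and merged with the identification $\xi_J\propto\Pf^{[1,m]}(\mathscr A_J)$; it buys self-containment at the cost of redoing those constants. One small slip to fix: it is $\Omega^n$, not $\Omega$ itself, that collapses to a multiple of $\omega_1\wedge\cdots\wedge\omega_{kn}$. Writing $\Omega=\sum_{J_1}x_{J_1}\otimes\omega_{J_1}$ with $\omega_{J_1}=\omega_{j_1}\wedge\cdots\wedge\omega_{j_k}$, only after taking the $n$-th wedge power do the $\omega$-factors assemble into $\omega_1\wedge\cdots\wedge\omega_{kn}=\Det_q^{[m]}(\mathscr A)\,(x_1\wedge\cdots\wedge x_{kn})^{\otimes(m-1)}$.
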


\begin{proof}

Suppose $k'=1$ in Theorem \ref{Composition pf}, one has that
\begin{equation}
\Pf^{[k,m]}(\Pf^{[1,m]}(\mathscr {A}_J))=\frac{[kn]_{q}!}{([k]_{q}!)^n[n]_{q^{k^2}}!}\Pf^{[1,m]}(\mathscr {A}),
\end{equation}
where
$J=J_1\times J_2\times \cdots J_m$,
$|J_{s}|=k$, for $1\leq s \leq m$.
It follows from Remark \ref{remark hpf} that
$\Pf_q^{[1,m]}(\mathscr {A}_J)=\frac{[n]_{q^2}!}{[n]_q!}\xi_J=\frac{[n]_{q^2}!}{[n]_q!}b_J$, $\Pf^{[1,m]}(\mathscr {A})=\frac{[n]_{q^2}!}{[n]_q!}{\Det}_q^{[m]}(\mathscr{A})$. Then
\begin{equation}
(\frac{[n]_{q^2}!}{[n]_q!})^n\Pf^{[k,m]}(b_J)
=\frac{[n]_{q^2}!}{[n]_q!}\frac{[kn]_{q}!}{([k]_{q}!)^n[n]_{q^{k^2}}!}{\Det}_q^{[m]}(\mathscr{A}),
\end{equation}
which implies \eqref{e:iden-pf-det2}.
\end{proof}

\section{Conclusion and discussion}
We have defined the notion of quantum hypermatrices and introduced the
quantum hyperdeterminant and quantum hyper-Pfaffian. The quantum
hyperdeterminant has quantized Cayley's first hyperdeterminant
and provided a quantum invariant for the space of the
quantum matrices. In particular, we have obtained the formula:
\begin{equation}
\hdet_q(B\,_l\circ_k A)={\det}_q(B)\hdet_q(A).
\end{equation}
We also proved that the space $\mathscr A$ is a co-bimodule for the tensor product of the
quantum coordinate ring $\Mat_q(n)$:
\begin{equation}
\mathscr A\longrightarrow \Mat_q(n)^{\otimes m}\otimes \mathscr A\otimes \Mat_q(n)^{\otimes m}.
\end{equation}
Using the dual co-algebra structure, we also obtain that $\mathscr A$ is a bi-module
for $U_q(sl(n))^{\otimes m}$:
\begin{equation}
U_q(sl(n))^{\otimes m}\otimes \mathscr A\otimes U_q(sl(n))^{\otimes m}\longrightarrow \mathscr A.
\end{equation}
Using this map we have shown that the image of $\hdet_q$ is exactly $\det_q^{\otimes m}$
in the space $\GL_q(n)^{\otimes m}$. However, our hyperdeterminant $\hdet_q$ is not
a central element in $\mathscr A$. This means that the algebra $\mathscr A$
is a very general uplift of the algebra  $\GL_q(n)^{\otimes m}$.

We conjecture that there exists an intermediate quotient algebra $\overline{\mathscr A}=\mathscr A/I$,
where the ideal $I$ contains some relations that make the quantum hyperdeterminant $\hdet_q$ a central element.
This ideal is trivial in the case of 2-dimensional quantum matrices (i.e. $m=1$ case). We note that
quantum analogs of Cayley's other hyperdeterminants may provide a solution, as
indicated by the classical case (cf. \cite{R1, R2, GKZ}).
On the other hand, this also means that we have obtained some optimal relations to define both
quantum hyperdeterminant and the quantum hyper-Pfaffian.

For $m=4$, if one uses the quantum Weyl algebra to define
the quadra\-tic relations for quantum hypermatrices,
there are more relations than our current approach of using the quantum
exterior algebra. However, it seems that the quantum Weyl algebra is also not enough to
make $\hdet_q$ central.

\bigskip

\centerline{\bf Acknowledgments}
This work is supported by NSFC grant Nos. 11271138 and 11531004, and Simons Foundation 198129.
The second author thanks the hospitality of North
Carolina State University and support from China Scholarship Council during the project.

 \vskip 0.1in

\bibliographystyle{amsalpha}

\end{document}